\documentclass{amsart}

\usepackage{amsmath,amssymb,amsfonts}
\usepackage{graphicx}
\usepackage{tikz}
\usepackage{esint}

\usepackage[sort,nospace]{cite}

\usepackage[colorlinks=true, pdfstartview=FitV, linkcolor=blue,citecolor=blue, urlcolor=blue]{hyperref}
\usepackage[capitalize,nameinlink,noabbrev]{cleveref}

\title[Energy dissipation near the outflow boundary]{Energy dissipation near the outflow boundary in~the vanishing viscosity limit}

\author[J. Yang]{Jincheng Yang}
\address{School of Mathematics, Institute for Advanced Study, 1 Einstein Dr, Princeton, NJ 08540, USA}
\email{jcyang@ias.edu}

\author[V.R. Martinez]{Vincent R. Martinez}
\address{Department of Mathematics and Statistics, CUNY Hunter College, 695 Park Ave, New York City, NY 10065, USA}
\address{
Department of Mathematics, CUNY Graduate Center, 365 5th Ave, New York City, NY 10016, USA
}
\email{vrmartinez@hunter.cuny.edu}

\author[A.L. Mazzucato]{Anna L. Mazzucato}
\address{Department of Mathematics, Pennsylvania State University, University Park, PA 16802, USA}
\email{alm24@psu.edu}

\author[A.F. Vasseur]{Alexis F. Vasseur}
\address{Department of Mathematics, The University of Texas at Austin, 2515 Speedway, Austin, TX 78712, USA}
\email{vasseur@math.utexas.edu}

\date{\today}
\keywords{Navier--Stokes equation, Euler equation, boundary layer, energy dissipation}
\subjclass[2020]{76D05, 35Q30}

\thanks{\textit{Acknowledgment}. The authors would like to thank American Institute of Mathematics (AIM) for hosting the workshop ``Small scale dynamics in incompressible fluid flows'', where this project \cite{AIM2023} was initiated. AIM receives major funding from the US National Science Foundation and the Fry Foundation. The first author was partially supported by the US National Science Foundation under Grant No.~DMS-1926686. The second author was partially supported by the US National Science Foundation under Grant No.~DMS-2213363 and DMS-2206491, as well as the Dolciani Halloran Foundation. The third author was partially supported by the US National Science Foundation under Grant No.~DMS-2206453 and by Simons Foundation under Grant No.~1036502. The fourth author was partially supported by the US National Science Foundation under Grant No.~DMS-2219434 and DMS-2306852.}

\newtheorem{theorem}{Theorem}[section]
\newtheorem{lemma}[theorem]{Lemma}
\newtheorem{proposition}[theorem]{Proposition}

\theoremstyle{definition}

\theoremstyle{remark}
\newtheorem{remark}[theorem]{Remark}

\renewcommand*{\d}{\mathop{\kern0pt\text{\upshape d}}\!{}}
\let\div\relax
\DeclareMathOperator{\div}{div}
\DeclareMathOperator{\tr}{tr}

\newcommand{\bu}{\boldsymbol u}
\newcommand{\bU}{\boldsymbol U}
\newcommand{\bV}{\boldsymbol V}
\newcommand{\bn}{\boldsymbol n}
\newcommand{\be}{\boldsymbol e}
\newcommand{\bx}{\boldsymbol x}
\newcommand{\by}{\boldsymbol y}
\newcommand{\bv}{\boldsymbol v}
\newcommand{\bF}{\boldsymbol F}

\newcommand{\bA}{\boldsymbol A}
\newcommand{\bB}{\boldsymbol B}

\newcommand{\tE}{\mathcal{E} _{\bU}}
\newcommand{\tomega}{\widetilde {\boldsymbol \omega}}
\newcommand{\normal}{^{\boldsymbol n}}
\newcommand{\tangential}{^{\boldsymbol \tau}}
\newcommand{\uE}{\bu _\mathrm E}
\newcommand{\uBL}{\bu _{\mathrm{BL}} ^\nu}
\newcommand{\unu}{\bu _\nu}
\newcommand{\vnu}{\bv _\nu}
\newcommand{\pE}{p _\mathrm E}
\newcommand{\pnu}{p _\nu}
\newcommand{\cO}{\mathcal O}
\newcommand{\Ub}{\bar U}
\newcommand{\Vb}{\bar V}
\newcommand{\Gammanuu}{\Gamma ^- _{\nu / \Ub}}

\newcommand{\bzero}{\boldsymbol 0}

\newcommand{\Tone}{T _1}
\newcommand{\Ttwo}{T _2}
\newcommand{\KE}{\mathcal{E}}

\begin{document}

\begin{abstract}
    We consider the incompressible Navier--Stokes and Euler equations in a bounded domain with non-characteristic boundary condition, and study the energy dissipation near the outflow boundary in the zero-viscosity limit. We show that in a general setting, the energy dissipation rate is proportional to $\bar U \bar V ^2$, where $\bar U$ is the strength of the suction and $\bar V$ is the tangential component of the difference between the Euler and the Navier--Stokes solutions on the outflow boundary. Moreover, we show that the enstrophy within a layer of order $\nu / \bar U$ is comparable with the total enstrophy. The rate of enstrophy production near the boundary is inversely proportional to $\nu$.
\end{abstract}

\maketitle
\tableofcontents

\section{Introduction}

We study the rate of energy dissipation of an incompressible fluid under the inflow/outflow boundary condition in the limit of vanishing viscosity. In most of the literature studying the Euler system or the Navier--Stokes system, impermeability is imposed on the boundary, i.e., characteristic boundary conditions. This forbids the fluid to enter or leave the domain. However, for many physical applications, such as drag reduction, it is important to consider injection on one portion of the boundary and suction on another. For instance, one may investigate the flow in a channel with injection on the top wall and suction on the bottom wall, as shown in \cref{fig:channel}. One can distinguish the fluid behavior near the boundary between characteristic and non-characteristic boundary conditions. In this paper, we study the energy dissipation and enstrophy production near the outflow boundary in presence of non-characteristic boundary conditions, in the vanishing viscosity limit, over general domains.

Let $\Omega \subset \mathbb R ^d$ be an open and bounded domain or a finite periodic channel, with $d = 2, 3$. Suppose $\partial \Omega$ is smooth and can be decomposed into two disconnected components:
\begin{align*}
    \partial \Omega = \Gamma = \Gamma ^+ \cup \Gamma ^-, \qquad \Gamma ^+ \cap \Gamma ^- = \varnothing.
\end{align*}
One can include an impermeable component $\Gamma^0$, disjoint from $\Gamma^{\pm}$. For simplicity, we take $\Gamma^0 = \varnothing$.
We consider an injection of incompressible fluid on $\Gamma ^+$ and a suction on $\Gamma ^-$ and denote its velocity by $\bU: (0, \infty) \times \partial \Omega \to \mathbb R ^d$. That is, $\bU \cdot \bn < 0$ on $\Gamma ^+$ and $\bU \cdot \bn > 0$ on $\Gamma ^-$, where $\bn$ is the unit outer normal vector. Due to incompressibility, $\bU$ needs to satisfy the flux compatibility condition at every $t > 0$:
\begin{align}
    \label{eqn:compatibility}
    \int _{\partial \Omega} \bU(t) \cdot \bn \d \sigma = \int _{\Gamma ^+} \bU (t) \cdot \bn \d \sigma + \int _{\Gamma ^-} \bU(t) \cdot \bn \d \sigma = 0.
\end{align}
We denote the normal component of $\bU$ by $\bU \normal = U \normal \bn = (\bU \cdot \bn) \bn$ and the tangential part by $\bU \tangential = \bU - \bU \normal$. The compatibility condition can thus be written as $\int _{\partial \Omega} U \normal \d \sigma = 0$, while the inflow/outflow conditions can be written as $U \normal<0$ on $\Gamma^+$ and $U \normal>0$ on $\Gamma^-$. Throughout the article, we assume $\bU \in C ^1 ([0, \infty); C ^2(\partial \Omega))$.
We discuss the well-posedness of the Navier--Stokes and Euler equations with these boundary conditions below. Our assumption on $\bU$ is the minimal regularity required for our result to hold, {\em assuming} a classical solution of the Euler equations exists.

\begin{figure}[htbp]
    \centering    
    \begin{tikzpicture}
        \def\width{6}
        \def\height{3}

        \draw (0,\height) -- (\width,\height) node[right] {$\Gamma ^+$};
        \draw[-latex] (\width/2 - 0.3, \height + 1) -- (\width/2 + 0.3, \height) node[midway, anchor=south west] {$\bU$};

        \foreach \x in {0.1,0.3,...,\width}{
            \draw (\x,\height) -- (\x+0.2,\height+0.2);
            \draw (\x,0) -- (\x-0.2,-0.2);
        }

        \draw (\width/2, \height/2) node {$\Omega$};

        \draw (0,0) -- (\width,0) node[right] {$\Gamma ^-$};
        \draw[-latex] (\width/2 - 0.3, 0) -- (\width/2 + 0.3, -1) node[anchor = south west] {$\bU$};

    \end{tikzpicture}
    \caption{Example of a channel with injection on the top wall and suction on the bottom wall.}
    \label{fig:channel}
\end{figure}

We consider a classical Euler solution $\uE: (0, T) \times \Omega \to \mathbb R ^d$, $\pE: (0, T) \times \Omega \to \mathbb R$ with the inflow/outflow boundary condition on some time interval:
\begin{align}
    \label{eqn:euler}
    \begin{cases}
        \partial _t \uE + (\uE \cdot \nabla) \uE + \nabla \pE = \bzero & \text{ in } (0, T) \times \Omega \\
        \div \uE = 0 & \text{ in } (0, T) \times \Omega \\
        \uE \vert _{t = 0} = \uE ^0 & \text{ in } \Omega \\
        \uE \vert _{\Gamma ^+} = \bU & \text{ on } (0, T) \times \Gamma ^+ \\
        \uE \normal \vert _{\Gamma ^-} = \bU \normal & \text{ on } (0, T) \times \Gamma ^- .
    \end{cases}
\end{align}
We suppose that the initial condition $\uE ^0$ is a $C ^1 (\Omega)$, divergence-free vector field. We note that on the inflow boundary $\Gamma ^+$, the velocity field $\uE$ is completely prescribed, whereas on the outflow boundary $\Gamma ^-$ only the normal component of $\uE$ is prescribed. Here we employ the same notation used for $\bU$ to denote the tangential and normal components of $\uE$, $\uE \tangential$ and $\uE\normal$ respectively. For the local-in-time well-posedness results of the Euler system with inflow/outflow boundary condition, we refer to \cite{antontsev1989,gie2022,gie2023,kukavica2023,bravin2022,chemetov08,petcu2006} and  references therein (we also mention the work \cite{bardos1998}, where pressure-vorticity conditions at inflow are considered). We observe that the global-in-time well-posedness for the Euler equations with inflow/outflow boundary data is only known in 2D for an analytic domain and analytic boundary data that become suitably small in an analytic norm as $t \to \infty$ \cite{kukavica2023}. In this work, we only assume the existence of \textit{some} classical solution $\uE$, which is of class $C ^1$ in time and space up to time $T$, without further assumptions on the regularity, stability, or uniqueness.
Also note that although viscous forces are absent in an Euler flow, the kinetic energy need not be conserved in the context of non-characteristic boundary conditions, since the energy flux, $\frac12\int _{\partial \Omega} \lvert \uE \rvert ^2 U \normal \d \sigma$, and the work due to pressure, $\int _{\partial \Omega} \pE U \normal \d \sigma$, are generally nonzero.

In the case of characteristic boundary conditions, it is an open problem to decide whether the Euler system can be considered a good approximation to the Navier--Stokes system in the vanishing viscosity limit. Throughout the paper by characteristic boundary conditions, we mean  no-slip conditions for Navier—Stokes and no-penetration conditions for Euler, excluding the case of Navier slip-with-friction conditions. Let $\{\unu ^0\} _{\nu > 0}$ be a family of divergence-free initial data converging to $\uE ^0$ in $L ^2 (\Omega)$ as $\nu \to 0$, where $\nu \in (0, \infty)$ is the kinematic viscosity coefficient. We consider Leray--Hopf weak solutions $(\unu, \pnu)$ to the following family of Navier--Stokes systems with inflow/outflow boundary condition, indexed by $\nu$:
\begin{align}
    \label{eqn:Navier-Stokes}
    \begin{cases}
        \partial _t \unu + (\unu \cdot \nabla) \unu + \nabla \pnu = \nu \Delta \unu & \text{ in } (0, T) \times \Omega \\
        \div \unu = 0 & \text{ in } (0, T) \times \Omega \\
        \unu \vert _{t = 0} = \unu ^0 & \text{ in } \Omega \\
        \unu \vert _{\partial \Omega} = \bU & \text{ on } (0, T) \times \partial \Omega.
    \end{cases}
\end{align}
The existence of weak solutions is known (see  \cite{alekseenko94,bertagnolio1999,boyer07} and references therein, see also \cite{chemetov2014}).
In contrast to the Euler system \eqref{eqn:euler}, the boundary value of $\unu$ is completely prescribed on both the inflow and the outflow boundary.
In this case, it is also possible to consider the situation where the domain is not smooth and the inflow/outflow/characteristic parts of the boundary, $\Gamma^{\pm,0}$, touch \cite{zajaczkowski87.2,Zajaczkowski09,Zajaczkowski87.1}.

In the case of no-slip boundary conditions, $\bU = \bzero$, the problem of whether the Navier--Stokes system \eqref{eqn:Navier-Stokes} converges to the Euler system \eqref{eqn:euler} with impermeable boundary condition, $\bU \normal=\bzero$, as $\nu \to 0$ is often referred to as the \textit{inviscid} or {\em zero viscosity limit problem}, which has been a long-standing open problem. 
One of the main analytic challenges in studying the inviscid limit arises from the disparity in the boundary conditions between the Navier--Stokes equations and the Euler equations. In the celebrated seminal work of \cite{prandtl1904}, Prandtl used a singular asymptotic expansion to show that at least in the laminar flow regime, the discrepancy between the Euler solution from the corresponding Navier--Stokes solution is predominantly captured by the behavior of the Euler solution in a boundary layer, the thickness of which is of order $\nu ^{1/2}$. In the regime of turbulent flows, however, boundary layers typically separate; mathematically, it is generally accepted that layer separation corresponds to the formation of singularities in the boundary layer equations, indicating a fundamental limitation in the Prandtl approximation. Moreover, one generally does not have effective control over the production of vorticity on the boundary, which is crucial to proving convergence.  Nevertheless, in \cite{kato1984} Kato established a characterization of the inviscid limit problem. As in Kato's work and most other known criteria for the validity of the inviscid limit, in this work we assume $\unu$ is a family of Leray--Hopf weak solutions to the Navier--Stokes equations and $\uE$ is a solution to the Euler equations with sufficient regularity. Kato showed that $\unu$ converges strongly to $\uE$ in $L ^\infty (0, T; L ^2 (\Omega))$ if and only if the following condition is satisfied:
\begin{align}
    \label{eqn:kato}
    \lim _{\nu \to 0} \int _0 ^T \int _{\Gamma _{c \nu}} \nu \lvert \nabla \unu \rvert ^2 \d x \d t = 0,
\end{align}
for some positive constant $c$, where $\Gamma _\delta$ denotes an interior tubular neighborhood of the boundary of positive width $\delta$ defined as
\begin{align*}
    \Gamma _\delta := \left\lbrace 
        x \in \Omega: \operatorname{dist} (x, \partial \Omega) < \delta
    \right\rbrace.
\end{align*}
We remark that the thickness of the Kato boundary layer is significantly smaller  than that of the Prandtl boundary layer. Equivalent criteria to Kato's have been derived in the literature \cite{temam1997,wang2001,kelliher2007,bardos2007,bardos2013}, for instance in terms of the behavior of vorticity and making a connection to dissipative solutions of the Euler equations.  Other types of conditions for ensuring the inviscid limit have also been proposed \cite{constantin2015,constantin2018}. In some contexts, the unconditional inviscid limit was established \cite{sammartino1998,masmoudi1998,lopes2008,fei2018,gie2019}, but whether Kato's criterion \eqref{eqn:kato} holds unconditionally in full generality is still an open question, even in two-dimensions, where both Euler and Navier--Stokes are globally well-posed. For a more thorough discussion of the vanishing viscosity limit and boundary layers in the impermeable case we refer to  survey articles \cite{bardos2007,bardos2013,maekawa2018} and references therein.

Compared to the case of characteristic boundary conditions, the literature on the inviscid limit problem in the presence of inflow/outflow at the boundary is more limited, and the available results mostly pertain to the setting of constant injection and suction in the normal direction to the boundary. In \cite{temam2002}, Temam and Wang studied the case of a 3D channel of height $h$ with periodic boundary conditions in the horizontal variables. The boundary condition at the top and bottom of the channel is injection/suction by a constant flow $\bU = -U ^* \be _3$,  for some $U ^* > 0$, which is perpendicular to the wall. They showed that $\unu - (\uE + \uBL) \to 0$ in $L ^\infty _t L ^2 _x$ and $L ^2 _t H ^1 _x$ with optimal rate provided that the initial condition $\unu ^0 = \uE ^0$ is regular, where $\uBL$ is a boundary layer of width $\nu / U ^*$, which is thinner than Prandtl's boundary layer in the characteristic case. An extension of this result to smooth bounded domains was obtained in \cite{gie2012}. Non-constant boundary data for flows linearized around a constant shear in a channel was considered in \cite{lombardo01}.

For oblique injection and suction, Doering et al.~\cite{doering2000a,doering2000b} studied the energy dissipation with oblique suction in a 3D channel of height $h$ with $\bU = V ^* \be _1 - U ^* \be _3$ at the injection boundary and $\bU = -U ^* \be _3$ at the suction boundary (we switched the notation $U ^*$ and $V ^*$ from their paper for consistency). They show a laminar dissipation rate at the suction boundary in the small viscosity limit of the form 
\begin{align}
    \label{eqn:dsw}
    \lim _{\nu \to 0} \varepsilon _\ell = \frac{\tan \theta}2 \frac{V ^{*3}}{h} = \frac{U ^* V ^{* 2}}{2 h},
\end{align}
where $\varepsilon _\ell = {(h|\partial\Omega|)^{-1}}\langle \nu \lVert \nabla \unu \rVert _{L ^2} ^2 \rangle$ is the long-time average of the energy dissipation rate per unit mass, and $\tan \theta = {U ^*}/{V ^*}$ is the tangent of the injection angle $\theta$. 

In this work, we prove that the energy dissipation rate \eqref{eqn:dsw} essentially holds for generic flows (of the required regularity) over any domain. To state our main result, recall that $\bU \in C ^1 ([0, \infty ); C ^2(\partial \Omega))$ is given. For a given $T > 0$, we assume $\uE \in C ^1 ([0, T] \times \bar \Omega)$ is a classical solution of \eqref{eqn:euler}. We denote the strength of the suction and of the tangential component of the difference between the Euler and the Navier--Stokes solutions, respectively, by
\begin{align*}
   \Ub &= \lVert \bU \rVert _{L ^\infty ((0, T) \times \Gamma ^-)}, &
   \Vb &= \lVert \uE \tangential - \bU \tangential \rVert _{L ^\infty ((0, T) \times \Gamma ^-)},
\end{align*} 
and we let $\beta$, $\gamma$, and $K$ be positive constants such that $\Vb \le \beta \Ub$ and such that 
\begin{align}
\label{cond:smooth-lower}
    \exp \left(2 \int _0 ^T \lVert \nabla \uE (t) \rVert _{L ^\infty (\Omega)} \d t \right) &\le K , &
    \int _0 ^T \int _{\Gamma ^-} \lvert \uE \tangential - \bU \tangential \rvert ^2 U \normal \d \sigma \d t &\ge \gamma S T \Ub \Vb ^2,
\end{align}
where $S = \lvert \Gamma ^- \rvert$ is the area of the outflow boundary. The constants $\beta$, $\gamma$, and $K$ have the following meaning: $1/\beta$ bounds the averaged injection angle $\tan \theta = \bar U / \bar V$ from below,  $K$ represents the largest possible vorticity stretching rate by the Euler flow, while $\gamma$ ensures sufficient strength of the suction. Lastly, following the notation for $\Gamma_\delta$ already introduced,  $\Gammanuu$ denotes an interior neighborhood of $\Gamma ^-$ of width $\nu / \Ub$:
\begin{align*}
    \Gammanuu := \left\lbrace 
        x \in \Omega: \operatorname{dist} (x, \Gamma ^-) < \nu / \Ub
    \right\rbrace.
\end{align*}
We are now in the position to state our main result.

\begin{theorem}
    \label{thm:main}
    Fix $T > 0$. Let $\uE \in C ^1 ([0, T] \times \bar \Omega)$ be a classical solution to \eqref{eqn:euler}, and let $\unu$ be a Leray--Hopf weak solution to \eqref{eqn:Navier-Stokes}. 
    Then there exist positive constants $c _1, c _2$ depending on $\beta$, $\gamma$, and $K$, such that 
    \begin{align*}
        c _1 S T \Ub \Vb ^2 & \le \liminf _{\nu \to 0} \int _0 ^T \int _{\Gammanuu} \nu \lvert \nabla \unu \rvert ^2 \d x \d t \\
        & \le \limsup _{\nu \to 0} \int _0 ^T \int _{\Omega} \nu \lvert \nabla \unu \rvert ^2 \d x \d t \le c _2 S T \Ub \Vb ^2.
    \end{align*}
    Moreover, 
    \begin{align*}
        \limsup _{\nu \to 0} \lVert \unu - \uE \rVert _{L ^\infty (0, T; L ^2 (\Omega))} ^2 \le c _2 S T \Ub \Vb ^2.
    \end{align*}
\end{theorem}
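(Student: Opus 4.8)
The plan is to run a relative-energy (weak--strong) comparison between the Leray--Hopf solution $\unu$ and the classical Euler solution $\uE$, correcting the viscous boundary contribution by a thin divergence-free profile concentrated in $\Gammanuu$. Write $\boldsymbol w = \unu - \uE$. The decisive structural observation is the boundary behavior of $\boldsymbol w$: since $\unu = \bU = \uE$ on $\Gamma^+$ while on $\Gamma^-$ one has $\unu = \bU$ but only $\uE\normal = \bU\normal$, the difference satisfies $\boldsymbol w = \bzero$ on $\Gamma^+$, $\boldsymbol w = -(\uE\tangential - \bU\tangential)$ on $\Gamma^-$, and $\boldsymbol w\cdot\bn = 0$ on all of $\partial\Omega$. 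First I would derive the relative-energy (in)equality by combining the Leray--Hopf energy inequality for $\unu$ with the smooth evolution of $\uE$ tested against $\unu$. Because $\boldsymbol w$ is divergence-free and impermeable, the pressure term and the cubic convective term integrate to zero; the transport term produces the boundary integral $\mathcal B(t) = \tfrac12\int_{\Gamma^-}\lvert\uE\tangential - \bU\tangential\rvert^2 U\normal\,\d\sigma \ge 0$, which sits on the dissipative side, and there remain the stretching term $\int_\Omega (\boldsymbol w\cdot\nabla)\uE\cdot\boldsymbol w$, bounded by $\lVert\nabla\uE\rVert_{L^\infty}\lVert\boldsymbol w\rVert_{L^2}^2$, the absorbable term $\nu\int_\Omega\nabla\uE:\nabla\unu$, and the problematic viscous boundary term $\nu\int_{\partial\Omega}\boldsymbol w\cdot\partial_{\bn}\unu$.

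The heart of the argument is the treatment of this last term, for which I would build a corrector $\boldsymbol\Phi$: divergence-free, supported in $\Gammanuu$, equal to $\boldsymbol w$ on $\partial\Omega$, with $\lVert\boldsymbol\Phi\rVert_{L^2}^2\lesssim (\nu/\Ub)S\Vb^2$ and $\nu\lVert\nabla\boldsymbol\Phi\rVert_{L^2}^2\lesssim\Ub S\Vb^2$ (the width $\nu/\Ub$ is exactly what balances these two scales). Replacing $\partial_{\bn}\unu$ on the boundary by the bulk Navier--Stokes balance and integrating by parts, the pressure drops out since $\boldsymbol\Phi$ is tangential and solenoidal, and $\nu\int_{\partial\Omega}\boldsymbol w\cdot\partial_{\bn}\unu$ is reduced to volume integrals over the thin layer involving $\partial_t\boldsymbol\Phi$, $(\unu\cdot\nabla)\unu\cdot\boldsymbol\Phi$, and $\nu\nabla\unu:\nabla\boldsymbol\Phi$; each is either $\lesssim ST\Ub\Vb^2$ or of the form $\epsilon\,\nu\lVert\nabla\unu\rVert_{L^2}^2$ absorbable into the dissipation on the left.

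For the upper bounds, I would assemble these estimates into a differential inequality for $\tfrac12\lVert\boldsymbol w\rVert_{L^2}^2$, absorb the $\nu\lVert\nabla\unu\rVert^2$ remainders and the $\nu\int_\Omega\nabla\uE:\nabla\unu$ term by Young's inequality (the genuinely $O(\nu)$ pieces vanishing as $\nu\to0$ because $\uE$ is smooth), and close by Gr\"onwall, the exponential factor being controlled by $K\ge\exp(2\int_0^T\lVert\nabla\uE\rVert_{L^\infty}\,\d t)$. Using $\lVert\unu^0-\uE^0\rVert_{L^2}\to0$, this yields $\limsup_\nu\lVert\unu-\uE\rVert_{L^\infty_t L^2_x}^2\lesssim K\,ST\Ub\Vb^2$ and, feeding the bound back into the energy balance, $\limsup_\nu\nu\int_0^T\lVert\nabla\unu\rVert_{L^2}^2\,\d t\lesssim ST\Ub\Vb^2$, i.e.\ the top two inequalities of the theorem.

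For the lower bound I would exploit that $\boldsymbol\Phi$ is supported precisely in $\Gammanuu$: by Cauchy--Schwarz, $\int_0^T\int_{\Gammanuu}\nu\lvert\nabla\unu\rvert^2 \ge (\int_0^T\nu\int_\Omega\nabla\unu:\nabla\boldsymbol\Phi)^2/(\int_0^T\nu\lVert\nabla\boldsymbol\Phi\rVert^2)$, so with the denominator already controlled by $\lesssim ST\Ub\Vb^2$ it suffices to bound the cross term $\int_0^T\nu\int_\Omega\nabla\unu:\nabla\boldsymbol\Phi$ below by a multiple of $\int_0^T\mathcal B\,\d t$, which the hypothesis \eqref{cond:smooth-lower} forces to be $\ge\gamma ST\Ub\Vb^2$. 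Establishing this lower bound on the cross term uniformly in $\nu$ is where I expect the main difficulty to lie: it amounts to showing that the prescribed tangential jump $\Vb$ across the outflow is genuinely realized as gradient inside the layer and not radiated away, which I would attempt by integrating by parts, invoking the Navier--Stokes equation, and using the strong convergence $\unu\to\uE$ already obtained in the upper-bound step to identify the leading contribution of the cross term with the boundary flux $\int_0^T\mathcal B\,\d t$. Controlling the error in this identification down to the scale $\nu/\Ub$, without any a priori regularity of $\unu$ near $\Gamma^-$, is the crux of the whole argument.
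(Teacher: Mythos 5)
Your relative-energy skeleton is exactly the paper's starting point (Section 2.2, inequality \eqref{eqn:layer-separation-rate-2}): the boundary flux $\tfrac12\int_{\Gamma^-}\lvert\bV\rvert^2U\normal\d\sigma$ sits on the dissipative side and the viscous boundary work $\nu\int_{\Gamma^-}\partial_n\unu\cdot\bV\d\sigma$ is the term to be controlled. The genuine gap is in your treatment of that term by a corrector. Substituting $\nu\Delta\unu=\partial_t\unu+(\unu\cdot\nabla)\unu+\nabla\pnu$ and pairing with $\boldsymbol\Phi$ produces the trilinear term $\int_0^T\int_\Omega(\unu\cdot\nabla)\unu\cdot\boldsymbol\Phi\d x\d t$, and this is \emph{not} ``either $\lesssim ST\Ub\Vb^2$ or absorbable'' at Leray--Hopf regularity. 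Since $\nabla\boldsymbol\Phi$ has size $\Ub\Vb/\nu$ on a set of volume $\nu S/\Ub$, every H\"older estimate based on the uniform bounds $\unu\in L^\infty_tL^2_x\cap L^2_t\dot H^1_x$ (or on $L^4_tL^3_x$, whose norm itself grows like $\nu^{-1/4}$) produces negative powers of $\nu$. The best available route is to split $\unu=\boldsymbol w+\uE$, integrate by parts, and use a Hardy inequality across the layer; but because $\boldsymbol w=-\bV\neq\bzero$ on $\Gamma^-$, the quadratic piece obeys only
\begin{align*}
    \frac{\Ub\Vb}{\nu}\int_{\Gammanuu}\lvert\boldsymbol w\rvert^2\d x
    \;\lesssim\; S\Vb^3+\frac{\Vb}{\Ub}\,\nu\int_{\Gammanuu}\lvert\nabla\boldsymbol w\rvert^2\d x,
\end{align*}
and the last term is a multiple $\beta=\Vb/\Ub$ of the full dissipation: it can be absorbed into the left-hand side only when $\beta$ is small, whereas the theorem carries no smallness assumption on $\beta$. (The same integration by parts also produces the boundary term $\int_{\Gamma^-}U\normal(\bU\cdot\boldsymbol\Phi)\d\sigma=\cO(\Ub^2\Vb S)$, which exceeds the target size by the factor $\Ub/\Vb$ unless a cancellation is exhibited, and your proposal exhibits none.) This is precisely why corrector-based proofs (Temam--Wang, Gie et al.) require strong or highly regular Navier--Stokes solutions, and why the paper explicitly avoids any boundary-layer construction.

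What the paper does instead---and what your proposal is missing---is \cref{prop:boundary-work}:
\begin{align*}
    \left\lvert\nu\int_0^t\int_{\Gamma^-}\partial_n\unu\cdot\bV\d\sigma\d s\right\rvert
    \;\lesssim\; \Vb\, D_\nu^{\frac23}(TS)^{\frac13}+\Ub^{\frac12}\Vb\, D_\nu^{\frac12}(TS)^{\frac12}+R_\nu,
\end{align*}
where $D_\nu=\nu\int_0^t\int_{\Gammanuu}\lvert\nabla\unu\rvert^2\d x\d s$ is the dissipation \emph{localized in the layer}. This is proved with no corrector at all: a Calder\'on--Zygmund decomposition of $(0,T)\times\Gamma^-$ into parabolic boxes of scale at most $\nu/\Ub$ (\cref{lem:l32weaknew}) is combined with a local blow-up estimate (\cref{lem:local}) controlling the box averages of $\partial_n\unu$ in weak $L^{3/2}$ by $D_\nu$; the nonlinearity is handled by the scaling structure of the local problem, not by global interpolation. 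Once this proposition is available, \emph{both} halves of the theorem follow from the single inequality \eqref{eqn:layer-separation-rate-2} by two different applications of Young's inequality plus Gr\"onwall: weighting toward the full dissipation gives the upper bounds, while weighting toward $D_\nu$ and invoking \eqref{cond:smooth-lower}---the boundary flux is at least $\gamma ST\Ub\Vb^2$ and must be paid for by the boundary-work term---gives the lower bound. In particular, the step you yourself flag as the crux, namely a lower bound on the cross term $\nu\int\nabla\unu:\nabla\boldsymbol\Phi$ and its identification with the boundary flux, is never needed: the lower bound on $D_\nu$ falls out of the energy balance once the boundary work is bounded \emph{above} by powers of $D_\nu$. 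As written, your argument does not close for the upper bound (for general $\beta$) and is, by your own account, incomplete for the lower bound.
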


The theorem thus asserts that the energy dissipation rate over $\Gammanuu$ is comparable to the energy dissipation rate in the entire $\Omega$; this result is consistent with the width of the boundary layer in a channel studied in \cite{temam2002,gie2012,guoyang2024}, for which an explicit construction of the boundary layer was developed under higher regularity assumptions. In fact, following the explicit expressions for the boundary layers in these particular works, one can verify that the enstrophy of the solutions obtained there is also of order $\mathcal O (\nu ^{-1})$. Furthermore, the rate $\Ub \Vb ^2 = \tan \theta\, \bar V ^3$ is consistent with the rate \eqref{eqn:dsw} obtained in \cite{doering2000a,doering2000b} in the non-tangential regime ($\tan \theta \ge \frac1\beta$). Comparing to these previous works, ours is the first of such results for non-characteristic boundary conditions established in the setting of weak Navier--Stokes solutions over general domains. Notably, our result does not rely on an explicit construction of the approximate Navier--Stokes solution in the boundary layer, which typically requires high regularity for both the Euler and Navier—Stokes solutions. Indeed, to date it is unknown whether the inviscid limit holds in the low regularity setting for Navier—Stokes solutions in which we work. The proof of \cref{thm:main} crucially relies on an estimate on the boundary velocity gradient, inspired by similar estimates obtained in \cite{vasseur2023,vasseur2024} in the impermeable case.

\begin{remark}
    If we consider the $L ^2$-average of the velocity gradient near the boundary, we have 
    \begin{align*}
        \left( 
            \fint _{[0, T] \times \Gammanuu} \lvert \nabla \unu \rvert ^2 \d x \d t
        \right) ^\frac12 \approx \frac{\Ub \Vb}\nu.
    \end{align*}
    Here the average integral means $\fint _A = \frac1{|A|}\int _A$ for any measurable set $A$. Informally, this estimate shows that the average enstrophy production on the boundary is proportional to the Reynolds number. The corresponding result is not known in the characteristic case, except in special cases, in view of Kato's condition \eqref{eqn:kato}. By contrast, we are able to obtain a lower bound for the non-characteristic case thanks to the existence of a positive energy flux, which is zero in the characteristic case.
\end{remark}

\begin{remark}
    The conclusions of \cref{thm:main} seem analogous to the notion of anomalous dissipation. Indeed, Kolmogorov's \textit{zeroth law of turbulence} \cite{kolmogoroff1941a, kolmogoroff1941b, kolmogoroff1941c} predicts that for turbulent flows, the rate of energy dissipation tends to a positive limit as $\nu \to 0$, even in the interior of $\Omega$. We refer to \cite{brue2023,cheskidov2024,brue2024}, where examples of deterministic Navier--Stokes flows in the full space or on the three-dimensional torus that exhibit anomalous dissipation were constructed in the presence of external force, while for the unforced case we mention the recent preprint \cite{burczakArXiV2025} (see also \cite{derosa2024}). For deterministic examples of anomalous dissipation in scalar turbulence, where a passive tracer is advected by a turbulent flow,  we refer to \cite{amstromg2025,colombo2023,huysmans2024,elgindi2024,drivas2022} (the corresponding zeroth law is often called Yaglom's law \cite{yaglom1949local,monin2007}). To the authors' knowledge, there are no known rigorous deterministic examples of flows exhibiting anomalous dissipation with no-slip condition at impermeable walls. In our setting instead, there is a positive rate of energy dissipation  even for laminar flows (as can be seen in the example presented in \cref{sec:eg}). This fact is mainly due to the non-characteristic boundary conditions, which allow for a positive energy flux through the boundary. Enstrophy is produced due to the suction at the boundary, which is assumed to occur at a rate independent of viscosity. However, we stress that the connection to turbulence is only by analogy,  since our results are valid for any Leray--Hopf weak solution.
\end{remark}

 The paper is organized as follows. In \cref{sec:preliminary}, we introduce the notation and the basic estimates. In \cref{sec:local}, we estimate the boundary velocity gradient in a local neighborhood of the boundary. This will be used to estimate the boundary velocity gradient globally in \cref{sec:global}. The local-to-global step requires a Calder\'on--Zygmund style decomposition, which is similar to \cite{vasseur2023,vasseur2024}, and the details are deferred to \cref{app:cz}. Finally, we conclude the paper with the proof of \cref{thm:main} in \cref{sec:main-proof}.

\section{Preliminaries}
\label{sec:preliminary}

Throughout the article, we will assume that the boundary velocity $\bU$ has a divergence-free extension into the interior of $\Omega$, such that $\bU \in C ^1 ([0, T]; C ^2(\bar \Omega))$. One may refer to \cite[Section 2.5]{temam2024} for details. Such extension is not unique, but it does not affect the estimates. Since the Navier--Stokes solutions are Leray--Hopf solutions that exists globally in time, $T>0$ is the time of existence of the Euler solution.

\subsection{Energy estimate, Leray--Hopf weak solutions, and Trace estimates}

We introduce a forcing term $\bF$ such that $\bU$ solves the Navier--Stokes equation with no pressure:
\begin{align*}
    \begin{cases}
        \partial _t \bU + \bU \cdot \nabla \bU = \nu \Delta \bU + \bF, \\
        \div \bU = 0.
    \end{cases}
\end{align*}
We note that although $\bF$ depends on $\nu$, it is uniformly bounded in $L ^\infty ((0, T) \times \Omega)$.

Let $\vnu := \unu - \bU$. Then $\vnu$ satisfies
\begin{align}
    \label{eqn:Navier-Stokes-difference}
    \begin{cases}
        \partial _t \vnu + \unu \cdot \nabla \vnu + \vnu \cdot \nabla \bU + \nabla \pnu = \nu \Delta \vnu - \bF, \\
        \div \vnu = 0, \\
        \vnu \big\vert _{\Gamma} = \boldsymbol 0.
    \end{cases}
\end{align}
Formally, if one takes the dot product of the above equation with $\vnu$, then with sufficient regularity,  the following energy equality would hold pointwise:
\begin{align*}
    \partial _t \frac{\lvert \vnu \rvert ^2}{2} + \div \left(
        \frac{\lvert \vnu \rvert ^2}{2} \unu + \pnu \vnu
    \right) + \vnu ^{\otimes 2} : \nabla \bU + \nu \lvert \nabla \vnu \rvert ^2 = \nu \Delta \frac{\lvert \vnu \rvert ^2}2 - \vnu \cdot \bF
\end{align*}
where
\begin{align*}
    \bu\otimes\bv=(\bu^i\bv^j)_{i,j},\qquad \bv ^{\otimes 2}=\bv\otimes \bv,\qquad \bA:\bB=\tr(\bA ^\top \bB),
\end{align*}
for any $d\times d$ real matrices $\bA,\bB$. That is, 
\begin{align}
    \label{eqn:weak}
    \partial _t \frac{\lvert \vnu \rvert ^2}{2} + \div \left(
        \left(\frac{\lvert \vnu \rvert ^2}{2} + \pnu\right) \vnu
    \right) + \nu \lvert \nabla \vnu \rvert ^2 = \nu \Delta \frac{\lvert \vnu \rvert ^2}2 - D (\vnu)
\end{align}
with 
\begin{align*}
    D (\vnu) = \vnu \cdot \bF + \vnu ^{\otimes 2} : \nabla \bU + \bU \cdot \nabla \frac{\lvert \vnu \rvert ^2}{2}.
\end{align*}
Even though $D(\vnu)$ depends on $\bU$, we do not explicitly indicate this dependence, as $\bU$ is fixed throughout.
Because $\vnu$ vanishes on the boundary, integrating over $\Omega$ gives  the following bound:
\begin{align*}
    &\frac{\d}{\d t} \int _\Omega \frac{\lvert \vnu \rvert ^2}2 \d \bx + \int _\Omega \nu \lvert \nabla \vnu \rvert ^2 \d \bx \\
    & \qquad = -\int _\Omega \vnu ^{\otimes 2} : \nabla \bU \d \bx - \int _\Omega \vnu \cdot \bF \d x \\
    & \qquad \le \lVert \nabla \bU \rVert _{L ^\infty (\Omega)} \lVert \vnu \rVert _{L ^2 (\Omega)} ^2 + \lVert \bF \rVert _{L ^2 (\Omega)} \lVert \vnu \rVert _{L ^2 (\Omega)} \\
    & \qquad \le \lVert \nabla \bU \rVert _{L ^\infty (\Omega)} \lVert \vnu \rVert _{L ^2 (\Omega)} ^2 + T ^{-1} \lVert \vnu \rVert _{L ^2 (\Omega)} ^2 + T \lVert \bF \rVert _{L ^\infty (0, T; L ^2 (\Omega))} ^2 \\
    & \qquad \le \left(
        \lVert \nabla \bU \rVert _{L ^\infty (\Omega)} + T ^{-1} 
    \right) \left( 
        \lVert \vnu \rVert _{L ^2 (\Omega)} ^2 + T ^2 \lVert \bF \rVert _{L ^\infty (0, T; L ^2 (\Omega))} ^2
    \right).
\end{align*}
We next define $\tE = T ^2 \lVert \bF \rVert _{L ^\infty (0, T; L ^2 (\Omega))} ^2$, and note that it has the units of energy. Then using Gr\"onwall's inequality, we conclude for every $t \in (0, T)$ that 
\begin{align*}
    & \frac12 \lVert \vnu (t) \rVert _{L ^2 (\Omega)} ^2 + \nu \lVert \nabla \vnu \rVert _{L ^2 ((0, t) \times \Omega)} ^2 + \tE 
    \\
    & \qquad
    \le C \left(
        \lVert \vnu (0) \rVert _{L ^2 (\Omega)} ^2 + \tE
    \right) \exp \left(
        2 \lVert \nabla \bU \rVert _{L ^1 (0, t; L ^\infty (\Omega))} 
    \right).
\end{align*}
Therefore, by taking the supremum in $t \in [0, T]$ it follows that 
\begin{align*}
    & \lVert \vnu \rVert _{L ^\infty (0, T; L ^2 (\Omega))} ^2 + \nu \lVert \nabla \vnu \rVert _{L ^2 ((0, T) \times \Omega)} ^2 + \tE \le C (\lVert \vnu (0) \rVert _{L ^2 (\Omega)}, \bU, T) =: \KE,
\end{align*}
where we used that  $\vnu (0)$ is uniformly bounded in $L ^2$ in $\nu$, because $\unu ^0 \to \uE ^0$ in $L ^2 (\Omega)$.
We may choose $\KE$ sufficiently large (by choosing the data $\uE$ and $\bU$ appropriately) such that 
\begin{align*}
    \lVert \bU \rVert _{L ^\infty (0, T; L ^2 (\Omega))} ^2 + \nu \lVert \nabla \bU \rVert _{L ^2 ((0, T) \times \Omega)} ^2 \le \KE,
\end{align*}
 from which we  deduce that 
\begin{align}\label{est:energy-inequality}
    & \lVert \unu \rVert _{L ^\infty (0, T; L ^2 (\Omega))} ^2 + \nu \lVert \nabla \unu \rVert _{L ^2 ((0, T) \times \Omega)} ^2 \le 4 \KE.
\end{align}
Therefore $\{\unu\} _\nu$ is a uniformly bounded sequence in $L ^\infty (0, T; L ^2 (\Omega))$.

Thanks to \eqref{eqn:weak}, in standard fashion we can construct global weak solutions to \eqref{eqn:Navier-Stokes-difference} satisfying the energy inequality; that is, for every $t \in [0, T]$, it holds that 
\begin{align}
    \label{eqn:energy-inequality}
    \int _\Omega \frac{\lvert \vnu (t) \rvert ^2}2 \d \bx + \int _0 ^t \int _\Omega \nu \lvert \nabla \vnu \rvert ^2 \d \bx \d s & \le \int _\Omega \frac{\lvert \vnu (0) \rvert ^2}2 \d \bx  \\
    \notag
    & \qquad - \int _0 ^t \int _\Omega \vnu ^{\otimes 2} : \nabla \bU + \vnu \cdot \bF \d x \d s,
\end{align}
which yields a corresponding energy inequality for $\unu$.
Therefore, for the rest of the paper we assume $\unu$ is a \textit{Leray--Hopf weak solution}, that is, 
\begin{align*}
    \unu \in C _{\mathrm w} (0, T; L ^2 (\Omega)) \cap L ^2 (0, T; \dot H ^1 (\Omega))
\end{align*}
is a weak solution to \eqref{eqn:Navier-Stokes} in the sense of distribution such that $\vnu = \unu - \bU$ satisfies the energy inequality \eqref{eqn:energy-inequality}.

We remark that Leray--Hopf solutions have a valid notion of \textit{trace} in $L ^\frac43 _{\mathrm{loc}} ((0, T] \times \Gamma)$; local integrability appears in order to accommodate the low regularity of the initial data for Navier--Stokes. We work with $\vnu$ first. In dimension $d = 3$, by interpolation and the Sobolev Embedding, 
\begin{align*}
    \lVert \vnu \rVert _{L ^4 (0, T; L ^3 (\Omega))} ^2 &\le \lVert \vnu \rVert _{L ^\infty (0, T; L ^2 (\Omega))} \lVert \vnu \rVert _{L ^2 (0, T; L ^6 (\Omega))} \\
    &\le C \lVert \vnu \rVert _{L ^\infty (0, T; L ^2 (\Omega))} \lVert \nabla \vnu \rVert _{L ^2 (0, T; L ^2 (\Omega))} \le 
    C \KE\nu ^{-\frac12}.
\end{align*}
Similarly,
\begin{align*}
    \lVert \unu \rVert _{L ^4 (0, T; L ^3 (\Omega))} ^2 
    & \le C \KE\nu ^{-\frac12}.
\end{align*}

Let $\rho: [0, T] \to [0, 1]$ be a smooth cutoff function such that $\rho (0) = 0$, and $\rho (t) = 1$ for $t \in [T / 2, T]$, with $|\rho'| \le 3 / T$. Then 
\begin{align*}
    \begin{cases}
        \partial _t (\rho \vnu) + \nabla (\rho \pnu) = \nu \Delta (\rho \vnu) - \rho (\vnu \cdot \nabla \bU + \unu \cdot \nabla \vnu + \bF) + \rho' \vnu \\
        \div (\rho \vnu) = 0 \\
        \rho \vnu \big\vert _{\Gamma} = \boldsymbol 0 \\
        \rho \vnu \big\vert _{t = 0} = \boldsymbol 0.
    \end{cases}
\end{align*}
Using estimates for the Cauchy problem of the (time-dependent) Stokes system, we know 
\begin{align*}
    & \nu \lVert \nabla ^2 \vnu \rVert _{L ^\frac43 (T / 2, T; L ^\frac65 (\Omega))} + \lVert \nabla \pnu \rVert _{L ^\frac43 (T / 2, T; L ^\frac65 (\Omega))} \\
    & \qquad \lesssim 
    \lVert \vnu \cdot \nabla \bU + \unu \cdot \nabla \vnu + \bF \rVert _{L ^\frac43 (0, T; L ^\frac65 (\Omega))} + \frac1T \lVert \vnu \rVert _{L ^\frac43 (0, T; L ^\frac65 (\Omega))} 
    \\
    & \qquad \lesssim \lVert \vnu \rVert _{L ^4 (0, T; L ^3 (\Omega))} \lVert \nabla \bU \rVert _{L ^2 (0, T; L ^2 (\Omega))} + \lVert \unu \rVert _{L ^4 (0, T; L ^3 (\Omega))} \lVert \nabla \vnu \rVert _{L ^2 (0, T; L ^2 (\Omega))} \\
    & \qquad \qquad + T ^\frac34 |\Omega| ^\frac13 \lVert \bF \rVert _{L ^\infty (0, T; L ^2 (\Omega))} + T ^{-\frac14} |\Omega|^\frac13 \lVert \vnu \rVert _{L ^\infty (0, T; L ^2 (\Omega))} \\
    & \qquad \lesssim \KE \nu ^{-\frac34} + \KE^{\frac12}|\Omega|^{\frac13}T ^{-\frac14}
\end{align*}

Here we used $\tE \le \KE$, and $f \lesssim g$ means $f \le c g$ for some universal non-dimensional constant $c$. By the Trace Theorem and the Sobolev Embedding, we control $\vnu$ in $L ^\frac43$ on the boundary:
\begin{align}\label{est:trace}
    \nu \lVert \nabla \vnu \rVert _{L ^\frac43 ((T / 2, T) \times \Gamma)} \lesssim \KE \nu ^{-\frac34} + \KE^{\frac12}|\Omega|^{\frac13}T ^{-\frac14}.
\end{align}

This is true for any $T$, so we can combine the estimates in $[\frac T2, T]$, $[\frac T4, \frac T2]$, \dots, which yields that for any $t _0 > 0$, it holds that
\begin{align}\label{est:trace-2}
    \nu \lVert \nabla \vnu \rVert _{L ^\frac43 ((t _0, T) \times \Gamma)} \lesssim \KE\left[\frac1\nu \log \left(\frac T{t _0} \right)\right] ^{\frac34} + \KE^{\frac12}|\Omega|^{\frac13} t _0 ^{-\frac14}.
\end{align}
The trace of $\nabla \unu$ is also well-defined now because it differs from $\nabla \vnu$ by $\nabla \bU$ which is continuous. Subsequently, it satisfies an estimate of the form \eqref{est:trace} as well.

In dimension $d = 2$, \eqref{est:trace} can be improved by estimates in $L ^\frac32 ((t _0, T) \times \Gamma)$, and we get 
\begin{align*}
    \nu \lVert \nabla \vnu \rVert _{L ^\frac32 ((T/2, T) \times \Gamma)} \lesssim \KE \nu ^{-\frac23} + \KE ^{\frac12} |\Omega| ^{\frac13} T ^{-\frac13},
\end{align*}
which implies \eqref{est:trace} with $\Omega$-dependent constant. One can also add a third dimension $x _3 \in \mathbb T$ and work in $\tilde \Omega = \Omega \times \mathbb T$ to obtain the same result. Indeed, \eqref{est:trace} is still true in 2D as long as we allow the constant to depend on the length scale.

\subsection{Estimating the size of the boundary layer}
\label{sec:boundary-layer}

In this subsection, we estimate the $L ^2$ difference between the Navier--Stokes solution and the Euler solution. In particular, we derive the inequality \eqref{eqn:layer-separation-rate-2} below, which ultimately serves the departure point in obtaining the boundary layer and energy dissipation rate estimates claimed in \cref{thm:main}.

Formally, with sufficient regularity, the difference between $\unu$ and $\uE$ is given by the following formula:
\begin{align}
    \notag 
    & \hspace{-2em} \frac{\d}{\d t} \int _\Omega \frac{\lvert \unu - \uE \rvert ^2}2 \d x 
    + \int _\Omega \nu \lvert \nabla \unu \rvert ^2 \d x 
    + \int _{\partial \Omega} \frac{\lvert \unu - \uE \rvert ^2}2 (\unu \cdot \bn) \d \sigma \\
    \label{eqn:layer-separation-rate-1}
    & = -\int _\Omega [(\unu - \uE) \otimes (\unu - \uE)] : \nabla \uE \d x - \int _{\partial \Omega} (\pnu - \pE) (\unu - \uE) \cdot \bn  \d \sigma \\
    \notag 
    & \qquad + \nu \int _\Omega \nabla \unu : \nabla \uE \d x - \nu \int _{\partial \Omega} \partial _n \unu \cdot (\unu - \uE) \d \sigma.
\end{align}
Since $\unu = \bU$ and $(\unu - \uE) \cdot \bn = 0$ on $\partial \Omega$, $\uE = \bU$ on $\Gamma ^+$, it follows that
\begin{align*}
    &\int _{\partial \Omega} \frac{\lvert \unu - \uE \rvert ^2}2 (\unu \cdot \bn) \d \sigma = \frac12 \int _{\Gamma ^-} \lvert \bU \tangential - \uE \tangential \rvert ^2 U \normal \d \sigma, \\
    &\int _{\partial \Omega} (\pnu - \pE) (\unu - \uE) \cdot \bn \d \sigma = 0, \\
    &\nu \int _{\partial \Omega} \partial _n \unu \cdot (\unu - \uE) \d \sigma = \nu \int _{\Gamma ^-} \partial _n \unu \cdot (\bU \tangential - \uE \tangential) \d \sigma.
\end{align*}
So \eqref{eqn:layer-separation-rate-1} becomes
\begin{align*}
    & \frac{\d}{\d t} \int _\Omega \frac{\lvert \unu - \uE \rvert ^2}2 \d x 
    + \int _\Omega \nu \lvert \nabla \unu \rvert ^2 \d x 
    + \frac12 \int _{\Gamma ^-} \lvert \bU \tangential - \uE \tangential \rvert ^2 U \normal \d \sigma \\
    & \qquad = -\int _\Omega [(\unu - \uE) \otimes (\unu - \uE)] : \nabla \uE \d x \\
    & \qquad \qquad + \nu \int _\Omega \nabla \unu : \nabla \uE \d x + \nu \int _{\Gamma ^-} \partial _n \unu \cdot (\uE \tangential - \bU \tangential) \d \sigma \\
    & \qquad \le \lVert \nabla \uE \rVert _{L ^\infty (\Omega)} (t) \int _\Omega \lvert \unu - \uE \rvert ^2 \d x \\
    & \qquad \qquad + \frac\nu2 \int _\Omega \lvert \nabla \unu \rvert ^2 \d x + \frac\nu2 \int _\Omega \lvert \nabla \uE \rvert ^2 \d x + \nu \int _{\Gamma ^-} \partial _n \unu \cdot (\uE \tangential - \bU \tangential) \d \sigma.
\end{align*}
By integrating between $0$ and $t$, we have
\begin{align}
    \notag
    & \hspace{-2em} \lVert \unu - \uE \rVert _{L ^2 (\Omega)} ^2 (t) 
    + \nu \int _0 ^t \lVert \nabla \unu \rVert _{L ^2 (\Omega)} ^2 (s) \d s 
    + \int _0 ^t \int _{\Gamma ^-} \lvert \uE - \bU \rvert ^2 U \normal \d \sigma \d s \\
    \label{eqn:layer-separation-rate-2}
    & \le \lVert \unu - \uE \rVert _{L ^2 (\Omega)} ^2 (0) + 2 \int _0 ^t \lVert \nabla \uE (s) \rVert _{L ^\infty (\Omega)} \lVert \unu - \uE \rVert _{L ^2 (\Omega)} ^2 (s) \d s \\
    \notag
    & \qquad + \nu \int _0 ^t \lVert \nabla \uE \rVert _{L ^2 (\Omega)} ^2 (s) \d s + 2 \nu \int _0 ^t \int _{\Gamma ^-} \partial _n \unu \cdot (\uE \tangential - \bU \tangential) \d \sigma \d s.
\end{align}
From the hypotheses on $\uE$ it follows that $\nabla \uE \in L ^1 (0, T; L ^\infty (\Omega)) \cap L ^2 (0, T; L ^2 (\Omega))$. 

This formal argument can be justified even when $\unu$ is only a Leray--Hopf weak solution \eqref{eqn:energy-inequality}. To have the last integral well-defined, we note that $\partial _n \unu$ has a trace away from time $0$, and the integral near $0$ is negligible (see \cref{sub:first-epoch}).

We recall that $U \normal > 0$ on $\Gamma ^-$ so that each term on the left-hand side is non-negative. The first and the third term on the right will vanish as $\nu \to 0$. The second term on the right will be taken care of by Gr\"onwall's inequality. Thus the main difficulty is to estimate the fourth and final term on the right-hand side, which is the pairing between the trace of the Navier--Stokes velocity gradient and the difference in the tangential components between Euler and Navier--Stokes. This term is treated in \cref{sec:local} and \ref{sec:global}.

\subsection{A canonical example}
\label{sec:eg}

Before proceeding to the general analysis, we consider an enlightening reference case provided by the setting of the half-space. In this setting, one is able to discern the fundamental differences between characteristic and non-characteristic boundary conditions in an explicit manner, and subsequently set some basic expectations that motivates our main result. Indeed, let \begin{align*}
    \Omega = \mathbb T ^{d - 1} \times \mathbb R _+ = \{x = (x', x _d): x' \in \mathbb T ^{d - 1}, x _d > 0\}.     
\end{align*}
We then exert a constant suction on the boundary 
\begin{align*}
    \bU = V ^* \be _1 - U ^* \be _d \text{ on } \Gamma ^- = \mathbb T ^{d - 1} \times \{0\},
\end{align*}
where $V ^* \ge 0$ is the strength of the tangential velocity, interpreted as a boundary shear, and $U ^* \ge 0$ is the strength of the suction in the normal direction. We consider the idealized situation where injection happens at the infinite horizon:
\begin{align*}
    \lim _{x _d \to \infty} \bU = - U ^* \be _d.
\end{align*}
We remark that $U ^*$ and $V ^*$ are defined differently from \cite{doering2000a,doering2000b}. In this setting, the Euler solution is  given by the constant flow
\begin{align*}
    \uE = - U ^* \be _d.
\end{align*}
For simplicity, suppose $\unu ^0 = \uE = - U ^* \be _d$. We will illustrate below that the solution corresponding to $U ^* = 0$ behaves differently from the solution corresponding to  $U ^* > 0$ as $\nu \to 0$.

If $U ^* = 0$, then $\uE\equiv 0$, and the corresponding Navier--Stokes solution is given by the Prandtl solution (see for instance \cite[section 2.2]{chorin1990})
\begin{align*}
    \unu = \operatorname{erfc} \left(\frac{x _d}{\sqrt{4 \nu t}}\right) V ^* \be _1,
\end{align*}
where $\operatorname{erfc}$ is the complementary error function defined by 
\begin{align*}
    \operatorname{erfc} (z) = \frac2{\sqrt \pi} \int _z ^\infty e ^{-\xi ^2} \d \xi.
\end{align*}
From the expression of $\unu$, we see the boundary layer has width $\cO (\sqrt{\nu t})$ and the energy is given by 
\begin{align*}
    \frac12 \int _\Omega \lvert \unu - \uE \rvert ^2 \d x = \frac12 S V ^{*2} \int _0 ^\infty \operatorname{erfc} ^2 \left(\frac{x _d}{\sqrt{4 \nu t}}\right) \d x _d = \frac{2 - \sqrt 2}{\sqrt \pi} \sqrt{\nu t} S V ^{*2},
\end{align*}
which tends to zero at a rate of $\cO (\sqrt{\nu t})$ as $\nu \to 0$. As for the energy dissipation, we observe that the enstrophy at time $t$ is given by 
\begin{align*}
    \int _\Omega \lvert \nabla \unu \rvert ^2 \d x = S V ^{*2} \int _0 ^\infty \left(\frac1{\sqrt{4 \nu t}} \operatorname{erfc}' \left(\frac{x _d}{\sqrt{4 \nu t}}\right)\right) ^2 \d x _d = \sqrt{\frac2\pi}\frac{S V ^{*2}}{\sqrt{4 \nu t}},
\end{align*}
so the energy dissipation between $0$ and $t$ is 
\begin{align*}
    \int _0 ^t \int _\Omega \nu \lvert \nabla \unu \rvert ^2 \d x \d s = \sqrt{\frac2\pi} \sqrt{\nu t} S V ^{*2},
\end{align*}
which also tends to zero at the rate of $\cO (\sqrt{\nu t})$ if $\nu \to 0$. In fact, the long-time average energy dissipation rate is zero uniformly in viscosity: $\langle \nu\lVert\nabla \unu\rVert^2_{L^2(\Omega)}\rangle=\lim_{t \to \infty}\frac{1}t\int_0^t\nu\lVert\nabla\unu(s)\rVert^2_{L^2(\Omega)}\d s=0$.

Now let us study the case when $U ^* > 0$. In this scenario, the corresponding Navier--Stokes equation has a \textit{stationary} solution given by 
\begin{align*}
    \unu ^\infty = \exp \left(-\frac{U ^* x _d}{\nu}\right) V ^* \be _1 - U ^* \be _d.
\end{align*}
Then, making the ansatz $\unu (t, x) = \unu ^\infty (x) + f (t, x _d) V ^* \be _1$ and $\pnu = 0$, $f (t, z)$ satisfies the following initial boundary value problem:
\begin{align*}
    \begin{cases}
        \partial _t f - U ^* \partial _z f = \nu \partial _{zz} f & t > 0, z > 0 \\
        f (0, z) = - \exp \left( -\frac{U ^* z}\nu \right) & z > 0 \\
        f (t, 0) = 0 & t > 0 \\
        \lim _{z \to \infty} f (t, z) = 0 & t > 0 \,.
    \end{cases}
\end{align*}
We can interpret the term $f (t, x _d) V ^* \be _1$ as a suitable corrector.
The system above is explicitly solvable. We let $g (t, z) = \exp (\frac{U ^* z}{2 \nu} + \frac{U ^{* 2} t}{4 \nu}) f (t, z)$. Then $g$ satisfies the heat equation in half line with Dirichlet boundary condition:
\begin{align*}
    \begin{cases}
        \partial _t g = \nu \partial _{zz} g & t > 0, z > 0 \\
        g (0, z) = - \exp \left( -\frac{U ^* z}{2 \nu} \right) & z > 0 \\
        g (t, 0) = 0 & t > 0 \\
        \lim _{z \to \infty} \exp (-\frac{U ^* z}{2 \nu}) g (t, z) = 0 & t > 0 \,.
    \end{cases}
\end{align*}
Consequently, the solution is given in terms of the odd extension $\tilde{g}$ of $g$, which solves the heat equation in the full plane with initial data 
\begin{align*}
    \tilde{g}_0 (z) := - \operatorname{sgn} (z) 
    \exp \left( -\frac{U ^* |z|}{2 \nu} \right),
\end{align*}
and hence it is explicitly given by $\tilde g (t, z) = g _0 * H _{\nu t} (z)$, where
\begin{align*}
    H _{\nu t} (z) = \frac1{\sqrt{4 \pi \nu t}} \exp \left(-\frac{z ^2}{4 \nu t}\right)
\end{align*}
is the heat kernel. Therefore, 
\begin{align*}
    \unu (t, x) &= \unu ^\infty (t, x) + \exp \left(-\frac{U ^* x _d}{2 \nu} - \frac{U ^{*2} t}{4 \nu}\right) (g _0 * H _{\nu t}) (x _d) V ^* \be _1 \\
    &= \frac12 \left(
        \exp \left(-\frac{U ^* x _d}{\nu}\right) \operatorname{erfc} \left(\frac{x _d - U ^* t}{\sqrt{4 \nu t}}\right)+ \operatorname{erfc} \left(\frac{x _d + U ^* t}{\sqrt{4 \nu t}}\right) 
    \right) V ^* \be _1 - \bU ^* \be _d.
\end{align*}
The heat equation dissipates kinetic energy, so $\lVert g _0 * H _{\nu t} \rVert _{L ^2 (\mathbb R _+)} \le \lVert g _0 \rVert _{L ^2 (\mathbb R _+)}$. Therefore, as long as $U ^* > 0$, we see that $\unu (t) \to \unu ^\infty$ converges strongly in $L ^2$ at an exponential rate as $t \to +\infty$. 

\begin{figure}[htbp]
    \centering
    \begin{tikzpicture}
        \def\width{6}
        \def\height{3}

        \foreach \x in {0.1,0.3,...,\width}{
            \draw (\x,0) -- (\x-0.2,-0.2);
        }

        \draw[dashed] (\width/2, \height) -- (\width/2, 0);
        \draw[color=red,domain=0.1:2] plot (\x + \width/2, {5*exp(-\x)-5*exp(-2)-1});

        \draw (0,0) -- (\width,0) node[right] {$\Gamma ^-$};

        \draw[-latex] (\width/2, 0) -- (\width/2 + 2, -1) node[anchor = north west] {$\bU$};

        \draw[-latex] (\width/2, {5*exp(-1)-5*exp(-2)}) -- (\width/2 + 1, {5*exp(-1)-5*exp(-2)-1});

        \draw[-latex] (\width/2, {5*exp(-0.5)-5*exp(-2)}) -- (\width/2 + 0.5, {5*exp(-0.5)-5*exp(-2)-1});

        \draw[thick,dotted] (\width/2, 0) -- (\width/2, -1) node[midway,anchor = east] {$U ^*$} -- (\width/2 + 2, -1) node[midway,anchor = north] {$V ^*$};
        \draw (\width/2 + 2 -.5, -1) arc (180:153.53:.5) node[midway,anchor=east,yshift=2pt] {$\theta$};

        \usetikzlibrary{decorations.pathreplacing}
        \draw [decorate,decoration={brace,amplitude=5pt}] (\width/2, 0) -- (\width/2,2) node [midway,anchor = east,xshift=-5pt] {$\nu/{U ^*}$};

    \end{tikzpicture}
    \caption{Stationary solution to the Navier--Stokes equation with inflow/outflow boundary condition.}
    \label{fig:exponential}
\end{figure}

Let us now focus on the stationary solution $\unu ^\infty$. We plot the stationary solution in \cref{fig:exponential}. The boundary layer has width $\cO (\nu / U ^*)$, and
\begin{align*}
    \frac12 \int _\Omega \lvert \unu ^\infty - \uE \rvert ^2 \d x = \frac12 \int _\Omega \left({V ^*} \exp \left(-\frac{U ^* x _d}{\nu}\right)\right) ^2 \d x _d = \frac{\nu S}4 \frac{V ^{*2}}{U ^*},
\end{align*}
which vanishes of order $\cO (\nu)$ as $\nu \to 0$. However, the rate of energy dissipation for $\unu ^\infty$ does not vanish:
\begin{align*}
    \nu \int _\Omega \lvert \nabla \unu ^\infty \rvert ^2 \d x = \nu S \int _0 ^\infty \left(\frac{V ^* U ^*}{\nu} \exp \left(-\frac{U ^* x _d}{\nu}\right)\right) ^2 \d x _d = \frac12 S U ^* V ^{*2}.
\end{align*}
In this paper we will consider a general domain and a general flow. Although we do not address the issue of whether $\unu \to \uE$ as $\nu \to 0$ uniformly in $L ^2$, we do show that the average rate of energy dissipation is bounded away from zero in the limit of vanishing viscosity.

\section{Local behavior of the velocity gradient near the boundary}
\label{sec:local}

As pointed out in \cref{sec:boundary-layer}, the main difficulty is to estimate the velocity gradient near the boundary. This will be done in two steps. In this section, we will localize the system near a point on the boundary and estimate the velocity gradient in a small neighborhood. Since we will only need to pair the velocity gradient with the regular vector field $\uE - \bU$, we only need to control the velocity gradient in an averaged sense. In the next section, we will estimate the velocity gradient in a global sense. This ``local-to-global'' blow-up argument is inspired by \cite{vasseur2010,choi2014}, and has also been developed for higher derivative estimates \cite{vasseur2021} and interior trace estimates \cite{yang2023}.

To motivate the local estimate, let us consider the simple scenario of a flat boundary. The case of curved boundary is more technical, but similar ideas apply.

We localize our system near a point $\bx ^* \in \Gamma ^-$, the outflow boundary, and we fix a time $t ^* > 0$. Given any $\varepsilon > 0$, we construct a box $B ^+ _\varepsilon (\bx ^*) = \bar B _\varepsilon (\bx ^*) \times [0, \varepsilon)$, the base of which $\bar B _\varepsilon (\bx ^*) \subset \Gamma ^-$ is a $(d - 1)$-dimensional box, and we denote $Q _\varepsilon ^+ (t ^*, \bx ^*) = (t ^* - \varepsilon ^2 / \nu, t ^*) \times B _\varepsilon ^+$, $\bar Q _\varepsilon = (t ^* - \varepsilon ^2 / \nu, t ^*) \times \bar B _\varepsilon$. This box is chosen such that upon scaling, the rescaled solution solves Navier--Stokes equation in a unit cube with viscosity $1$.

\begin{figure}[htbp]
    \centering    
    \begin{tikzpicture}
        \def\width{6}
        \def\height{3}

        \draw (1, 0) -- (1, 2.472) -- (5, 2.472) node [anchor=west] {$B _{2\varepsilon} ^+$} -- (5, 0);
        \draw (2, 0) -- (2, 1.236) -- (4, 1.236) node [anchor=north east] {$B _{\varepsilon} ^+$} -- (4, 0);

        \foreach \x in {0.1,0.3,...,\width}{
            \draw (\x,0) -- (\x-0.2,-0.2);
        }

        \draw (0,0) -- (\width,0) node[right] {$\Gamma ^-$};
        \draw[-latex] (\width/2 - 0.3, 0) -- (\width/2 + 0.3, -1) node[anchor = south west] {$\bU$};
        \fill (\width/2, 0) node[anchor = south] {$\bx ^*$} circle (.05);

    \end{tikzpicture}
    \caption{Localization near the boundary at space-scale $\varepsilon$ and time-scale $\varepsilon ^2 / \nu$.}
    \label{fig:local-boundary}
\end{figure}

For notational convenience, we let $\lVert \cdot \rVert _\infty$ represent the $L ^\infty ((0, T) \times \Omega)$ norm. Then we prove the following local estimate. 

\begin{lemma}
    \label{lem:local}
    Let $t ^* > 0$, $\bx ^* \in \Gamma ^-$. 
    Recall that $\bU \in C ^1([0, T]; C ^2(\bar \Omega))$. Given $\eta \in (0, 1)$, let $\nu > 0$ be sufficiently small such that the following conditions are satisfied:
    \begin{align*}
        \nu \lVert \nabla \bU \rVert _\infty \le \eta \Ub ^2, \qquad \nu \lVert \partial _t \bU \rVert _\infty + \nu ^2 \lVert \Delta \bU \rVert _\infty \le \eta \Ub ^3.
    \end{align*}
    Let $\varepsilon \in (0, \nu / (2 \Ub))$. If $\unu$ is a weak solution to the Navier--Stokes system in $Q _{2 \varepsilon} ^+ (t ^*, \bx ^*)$ with $\unu = \bU$ on $\Gamma ^-$ and  
    \begin{align*}
        \left( 
            \fint _{Q _{2 \varepsilon} ^+} \lvert \nabla \unu \rvert ^2 \d x \d t
        \right) ^\frac12 \le \eta \nu \varepsilon ^{-2},
    \end{align*}
    then there exists a universal constant $c$ such that 
    \begin{align*}
        \left\lvert 
            \fint _{\bar Q _{\varepsilon}} \partial _n \unu \d \sigma \d t
        \right\rvert \le c\eta \nu \varepsilon ^{-2}.
    \end{align*}
\end{lemma}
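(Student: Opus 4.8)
The plan is to remove the viscosity by a parabolic rescaling and then read off the averaged boundary normal derivative from the momentum equation, using that the rescaled velocity gradient is small and the rescaled boundary datum is almost constant. I would first rescale: setting $\tilde\bu(s,\by) := \tfrac{\varepsilon}{\nu}\unu(t ^* + \tfrac{\varepsilon ^2}{\nu} s, \bx ^* + \varepsilon\by)$ and rescaling the pressure accordingly, $\tilde\bu$ solves the Navier--Stokes system with unit viscosity on a fixed parabolic half-box $\tilde Q ^+ _2$ over the flattened face $\{y _d = 0\}$, with boundary datum $\tilde\bU(s,\by) = \tfrac{\varepsilon}{\nu}\bU(t ^* + \tfrac{\varepsilon ^2}{\nu}s, \bx ^* + \varepsilon\by)$. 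Since $\nabla\unu = \tfrac{\nu}{\varepsilon ^2}\nabla _\by\tilde\bu$ and $\partial _n\unu = \tfrac{\nu}{\varepsilon ^2}\partial _n\tilde\bu$, the hypothesis becomes $(\fint _{\tilde Q ^+ _2}\lvert\nabla\tilde\bu\rvert ^2)^{1/2}\le\eta$ and the conclusion becomes $\lvert\fint\partial _n\tilde\bu\rvert\le c\eta$. The standing smallness assumptions on $\nu$ together with $\varepsilon < \nu/(2\Ub)$ translate exactly into $\lvert\tilde\bU\rvert\le\tfrac12$ and $\lVert\nabla _\by\tilde\bU\rVert _\infty + \lVert\partial _s\tilde\bU\rVert _\infty + \lVert\Delta _\by\tilde\bU\rVert _\infty\lesssim\eta$; in particular $\tilde\bU$ lies within $O(\eta)$ in $L ^\infty$ of the frozen constant $\bM := \tfrac{\varepsilon}{\nu}\bU(t ^*, \bx ^*)$, with $\lvert\bM\rvert\le\tfrac12$.

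The key preliminary is that $\tilde\bu$ is itself $O(\eta)$-close to $\bM$ in $L ^2$. Since $\tilde\bu - \tilde\bU$ vanishes on the face $\{y _d = 0\}$, the Poincar\'e inequality for functions vanishing on a face and the gradient bound give $\lVert\tilde\bu - \tilde\bU\rVert _{L ^2(\tilde Q ^+ _2)}\lesssim\lVert\nabla\tilde\bu\rVert _{L ^2} + \lVert\nabla\tilde\bU\rVert _{L ^2}\lesssim\eta$, while $\lVert\tilde\bU - \bM\rVert _{L ^\infty}\lesssim\lVert\nabla _\by\tilde\bU\rVert _\infty\lesssim\eta$; hence $\tilde{\boldsymbol w} := \tilde\bu - \bM$ satisfies $\lVert\tilde{\boldsymbol w}\rVert _{L ^2}\lesssim\eta$. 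I would then split $\partial _n\tilde\bu$ into its normal and tangential parts. For the normal part, evaluating $\div\tilde\bu = 0$ on $\{y _d = 0\}$ and using $\tilde\bu = \tilde\bU$ there expresses $\partial _n(\tilde\bu\cdot\bn)$ through tangential derivatives of $\tilde\bU$, which are $O(\eta)$; so $\fint\partial _n(\tilde\bu\cdot\bn)$ is $O(\eta)$ with no further work.

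For the tangential components I would recover $\fint _{\bar Q _1}\partial _n\tilde u ^i$ by integrating the $i$-th momentum equation against a weight that is sharp in the tangential footprint $\bar B _1$ (to produce the flat average), averaged over a normal slab $\{0 < y _d < h\}$, $h\in(1,2)$, and smoothly cut off in time. Replacing $\Delta\tilde u ^i = \partial _s\tilde u ^i + (\tilde\bu\cdot\nabla)\tilde u ^i + \partial _i\tilde p$ and applying the divergence theorem turns the viscous term into the desired bottom-face flux $\int _{\bar B _1}\partial _n\tilde u ^i$ plus an interior trace on $\{y _d = h\}$ and lateral fluxes; averaging over $h$ and over the footprint size converts the latter into interior volume integrals of $\nabla\tilde\bu = \nabla\tilde{\boldsymbol w}$, hence $O(\eta)$. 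The time term, after integrating by parts against the temporal cutoff, is $\lesssim\lVert\tilde{\boldsymbol w}\rVert _{L ^2}\lesssim\eta$. The convection term is $O(\eta)$ precisely because of the constant subtraction: $(\tilde\bu\cdot\nabla)\tilde u ^i = ((\bM + \tilde{\boldsymbol w})\cdot\nabla)\tilde w ^i$ carries one small factor throughout, and is bounded by $\lvert\bM\rvert\,\lVert\nabla\tilde{\boldsymbol w}\rVert _{L ^2} + \lVert\tilde{\boldsymbol w}\rVert _{L ^2}\lVert\nabla\tilde{\boldsymbol w}\rVert _{L ^2}\lesssim\eta$.

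The main obstacle is the pressure. For the tangential components it survives only in the lateral-face flux $\int\partial _i\tilde p$, i.e.\ through $\tilde p$ on faces that still meet the physical boundary $\{y _d = 0\}$, where no boundary condition on $\tilde p$ is available a priori. I would control it by a local pressure estimate adapted to the flat face: writing $\Delta\tilde p = -\partial _i\partial _j(\tilde u ^i\tilde u ^j) = -\partial _i\partial _j(\bM ^i\tilde w ^j + \tilde w ^i\bM ^j + \tilde w ^i\tilde w ^j)$ (the constant part $\bM\otimes\bM$ drops out) with Neumann datum read off from the normal momentum balance and the boundary data, the non-constant part of $\tilde p$ is again driven by $O(\eta)$ quantities, so a Poisson/Calder\'on--Zygmund decomposition near the face yields $\lVert\tilde p - \overline{\tilde p}\rVert _{L ^q}\lesssim\eta$ locally, which is all that is needed. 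An alternative that sidesteps the pressure entirely is to test against a divergence-free field tangent to the boundary, at the cost of obtaining a smoothly weighted boundary average in place of the sharp one. Finally, the integrations by parts are justified for the Leray--Hopf solution by the trace bounds \eqref{est:trace}--\eqref{est:trace-2} of \cref{sec:preliminary}, and the curved case reduces to the flat one by flattening the boundary, the resulting Jacobian and curvature terms being lower order and absorbed into the $O(\eta)$ estimates.
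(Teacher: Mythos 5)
Your rescaling, the translation of the smallness hypotheses into $\lvert \bM \rvert \le \tfrac12$ and $O(\eta)$ bounds on the derivatives of the rescaled datum, the divergence-free argument for the normal component, the constant subtraction $\tilde{\boldsymbol w} = \tilde\bu - \bM$, and the resulting bounds on the time and convection terms are all sound. Up to that point your argument runs parallel to the paper's, with one structural difference: the paper rescales the difference $\unu - \bU$ with the \emph{full extended field} $\bU$ (not a frozen constant), so that the rescaled unknown $\bv$ vanishes identically on the face and solves a forced Stokes system whose force $\bF$ is small in $L^1_t L^{3/2}_x$; the paper then concludes by invoking \cite[Proposition 2]{vasseur2024} as a black box. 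In other words, what you are attempting is to reprove that proposition inline, and the decisive issue --- the pressure --- is exactly what that proposition was designed to handle.

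This is where your proposal has a genuine gap. The Neumann datum you propose to ``read off from the normal momentum balance'' is $\partial_n \tilde p = \bn \cdot (\Delta \tilde\bu - \partial_s \tilde\bu - (\tilde\bu \cdot \nabla)\tilde\bu)$ on $\{y_d = 0\}$, and its leading term $\bn \cdot \Delta\tilde\bu$ is a boundary trace of \emph{second} derivatives of the velocity; it is not controlled in any norm by the hypothesis $\lVert \nabla \tilde\bu \rVert_{L^2} \lesssim \eta$, which provides one derivative too few. Equivalently, $\tilde p$ splits into a part driven by $\div\div(\tilde\bu \otimes \tilde\bu)$ --- which is indeed $O(\eta)$ after subtracting $\bM \otimes \bM$, as you say --- plus a harmonic part whose behavior near the face is not controlled by the local energy at all; that harmonic part is precisely why boundary gradient estimates for Navier--Stokes are fundamentally harder than interior ones, and it cannot be dispatched by a ``Poisson/Calder\'on--Zygmund decomposition near the face.'' Your fallback --- testing with divergence-free fields satisfying $\phi \cdot \bn = 0$ on the boundary, which annihilates the pressure since $\int_{\partial} \tilde p \, \phi \cdot \bn \, \d\sigma = 0$ --- is the right idea and is in the spirit of how the cited proposition is actually proved, but in your write-up it is mentioned rather than executed, and it yields a smoothly weighted boundary average rather than the sharp average claimed in \cref{lem:local}. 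One would then either have to upgrade it to the sharp average or check that the Calder\'on--Zygmund argument of \cref{app:cz} tolerates weighted averages (it does, provided the weights are probability densities on each cube, but this must be stated and verified). As written, the central analytic difficulty of the lemma is the step your proposal treats most loosely.
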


\begin{proof}
    We make a change of variable. For $(s, \by) \in (-4, 0) \times B _2 ^+$, define the non-dimensional variables
    \begin{align*}
        \bv (s, \by) &= \frac{(\unu - \bU) (t ^* + \varepsilon ^2 \nu ^{-1} s, \bx ^* + \varepsilon \by)}{\nu / \varepsilon}, \\
        q (s, \by) &= \frac{\pnu (t ^* + \varepsilon ^2 \nu ^{-1} s, \bx ^* + \varepsilon \by)}{\nu ^2 / \varepsilon ^2},
    \end{align*}
    Then $\bv$ satisfies the following system:
    \begin{align*}
        \begin{cases}
            \partial _s \bv + \nabla _{\by} q = \Delta _{\by} \bv - \bF & \text{ in } (-4, 0) \times B _2 ^+ \\
            \bv = \boldsymbol 0 & \text{ on } (-4, 0) \times \bar B _2.
        \end{cases}
    \end{align*}
    where 
    \begin{align*}
        \bF = \bv \cdot \nabla _{\by} \bv + \frac{\varepsilon}\nu \bU \cdot \nabla _{\by} \bv + \frac{\varepsilon ^2}\nu {\bv} \cdot \nabla _{\bx} \bU + \frac{\varepsilon ^3}{\nu ^2} (\partial _t \bU + \bU \cdot \nabla _{\bx} \bU - \nu \Delta _{\bx} \bU).
    \end{align*}
    As a reference, we list the scale of each quantity in the following table.
    \begin{table}[htbp]
        \centering
        \caption{The scale of each quantity}
        \label{tab:scale}
        \renewcommand{\arraystretch}{1.3}
        \begin{tabular}{cc|cccccc}
            \hline \hline 
            $x$ & $t$ & $\unu$ & $\nabla \unu$ & $\Delta \unu$ & $\partial _t \unu$ & $\pnu$ & $\nabla \pnu$ \\
            \hline 
            $\varepsilon$ & $\varepsilon ^2 / \nu$ & $\nu / \varepsilon$ & $\nu / \varepsilon ^2$ & $\nu / \varepsilon ^3$ & $\nu ^2 / \varepsilon ^3$ & $\nu ^2 / \varepsilon ^2$ & $\nu ^2 / \varepsilon ^3$ \\
            \hline \hline
        \end{tabular}
    \end{table}

    By the bound on $\unu$ we find that
    \begin{align*}
        \fint _{Q _{2} ^+} \lvert \nabla _{\by} \bv \rvert ^2 \d y \d s = \nu ^{-2} \varepsilon ^4 \fint _{Q _{2 \varepsilon} ^+} \lvert \nabla (\unu - \bU) \rvert ^2 \d x \d t \le 2 \eta ^2 + \frac{2 \varepsilon ^4}{\nu ^2} \lVert \nabla \bU \rVert _\infty ^2 \le 4 \eta ^2.
    \end{align*}
    Recall that $\bv = \bf 0$ on $(-4, 0) \times \bar B _2$. By the Sobolev Embedding, we also have 
    \begin{align*}
        \fint _{-4} ^0 \lVert \bv (s) \rVert _{L ^6 (B _2 ^+)} ^2 \d s \le C \fint _{-4} ^0 \lVert \nabla _{\by} \bv (s) \rVert _{L ^2 (B _2 ^+)} ^2 \d s \le c \eta ^2.
    \end{align*}
    Given the hypotheses of the Lemma, from these estimates, we conclude 
    \begin{align*}
        \fint _{-4} ^0 \lVert \bF \rVert _{L ^\frac32 (B _1 ^+)} \d s \le c \eta ^2 + c \eta \le c \eta.
    \end{align*}
    We complete the proof by using \cite[proposition 2]{vasseur2024}:
    \begin{align*}
        \left\lvert \fint _{\bar Q _{\varepsilon}} \partial _n \bv \d \sigma \d t \right\rvert \le C \left( 
            \lVert \nabla \bv \rVert _{L ^2 (Q _1 ^+)} + \lVert \bF \rVert _{L ^1 _t L ^\frac65 _x (Q _1 ^+)}
        \right) \le c \eta,
    \end{align*}
    which scales to the desired estimate, since $\lVert \partial _n \bU \rVert _\infty \le \eta \nu ^{-1} \Ub ^2 \le \eta \nu \varepsilon ^{-2}$.
\end{proof}

\section{Boundary velocity gradient estimates}
\label{sec:global}

In this section, we estimate the the normal derivative of the velocity at the outflow boundary.

\begin{proposition}
    \label{prop:boundary-work}
    Let $\unu$ be a Leray--Hopf weak solution to \eqref{eqn:Navier-Stokes}, and let $\bV = \uE - \bU$, where $\uE \in C ^1 ([0, T] \times \bar \Omega)$ is the classical solution to \eqref{eqn:euler}. Suppose $\nu < T \Ub ^2 / 4$, then for any $t \in [0, T]$, it holds that
    \begin{align*}
        \left\lvert 
            \nu \int _0 ^t \int _{\Gamma ^-} \partial _n \unu \cdot \bV \d \sigma \d s
        \right\rvert &\lesssim \Vb \left( 
            \nu \int _0 ^t \int _{\Gammanuu} \lvert \nabla \unu \rvert ^2 \d x \d s
        \right) ^\frac23 \left(
            T S
        \right) ^\frac13 \\
        & \quad + \Ub ^\frac12 \Vb \left( 
            \nu \int _0 ^t \int _{\Gammanuu} \lvert \nabla \unu \rvert ^2 \d x \d s
        \right) ^\frac12 \left(
            T S
        \right) ^\frac12 + R _\nu,
    \end{align*}
    with $R _\nu \to 0$ uniform in $t$ as $\nu \to 0$.
\end{proposition}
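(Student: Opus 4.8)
The plan is to bound the boundary pairing by summing the local estimate of \cref{lem:local} over a Calderón--Zygmund--type covering of the space-time outflow boundary, in the spirit of \cite{vasseur2023,vasseur2024}; the combinatorial construction of the covering is the content of \cref{app:cz}. Writing $\partial _n \unu = \partial _n \vnu + \partial _n \bU$, the smooth piece contributes $\nu \int _0 ^t \int _{\Gamma ^-} \partial _n \bU \cdot \bV \d \sigma \d s = \cO(\nu)$, which I absorb into $R _\nu$; likewise the initial epoch $(0, t _0) \times \Gamma ^-$, where the trace is controlled only away from $t = 0$, is placed in $R _\nu$ and sent to zero after $\nu \to 0$ (see \cref{sub:first-epoch}). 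It thus suffices to estimate the pairing over $(t _0, t) \times \Gamma ^-$. The maximal admissible scale is $\varepsilon _{\max} = \nu / (2 \Ub)$, matching the constraint $\varepsilon < \nu/(2\Ub)$ of \cref{lem:local}, and the hypothesis $\nu < T \Ub ^2 / 4$ guarantees that the associated parabolic time scale $\varepsilon _{\max} ^2 / \nu = \nu / (4 \Ub ^2)$ fits inside $(0, T)$.

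For each point of $(t _0, t) \times \Gamma ^-$ I would run a stopping-time argument: starting from $\varepsilon _{\max}$ and refining, select the largest scale $\varepsilon _i \le \varepsilon _{\max}$ at which the smallness hypothesis $(\fint _{Q _{2 \varepsilon _i} ^+} \lvert \nabla \unu \rvert ^2) ^{1/2} \le \eta \nu \varepsilon _i ^{-2}$ of \cref{lem:local} still holds, and extract a Vitali subfamily $\{\bar Q _{\varepsilon _i}\}$ of bounded overlap covering $(t _0, t) \times \Gamma ^-$. On each box, after replacing $\bV$ by its value at the center (the Lipschitz error is $\lesssim \lVert \nabla \uE \rVert _\infty \varepsilon _i$ and, using $\varepsilon _i \le \nu/(2\Ub)$ with the trace bound \eqref{est:trace-2}, contributes an $\cO(\nu ^{1/4})$ term to $R _\nu$), \cref{lem:local} gives $\lvert \fint _{\bar Q _{\varepsilon _i}} \partial _n \unu \rvert \le c\, \eta _i\, \nu \varepsilon _i ^{-2}$, where $\eta _i \le \eta$ is the \emph{actual} normalized local gradient. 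Since $\lvert \bar Q _{\varepsilon _i} \rvert = \varepsilon _i ^{d+1} / \nu$, box $i$ contributes $\lesssim \Vb\, \eta _i\, \nu\, \varepsilon _i ^{d-1}$ to the pairing. Two bookkeeping facts drive the estimate: the covering gives $\sum _i \varepsilon _i ^{d+1} \lesssim \nu\, t S$, while, because every dilate satisfies $B _{2\varepsilon _i} ^+ \subset \Gammanuu$ with bounded overlap, $\sum _i \nu \int _{Q _{2 \varepsilon _i} ^+} \lvert \nabla \unu \rvert ^2 \lesssim D$, where $D := \nu \int _0 ^t \int _{\Gammanuu} \lvert \nabla \unu \rvert ^2 \d x \d s$.

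I would then split the family according to whether the stopping scale is interior ($\varepsilon _i < \varepsilon _{\max}$) or maximal ($\varepsilon _i = \varepsilon _{\max}$). On interior boxes the stopping condition saturates, $\fint _{Q _{2 \varepsilon _i} ^+} \lvert \nabla \unu \rvert ^2 = \eta ^2 \nu ^2 \varepsilon _i ^{-4}$, hence $\nu \int _{Q _{2 \varepsilon _i} ^+} \lvert \nabla \unu \rvert ^2 \simeq \eta ^2 \nu ^2 \varepsilon _i ^{d-2}$ and $\sum _i \varepsilon _i ^{d-2} \lesssim D / (\eta ^2 \nu ^2)$. Using the identity $d - 1 = \tfrac23 (d-2) + \tfrac13 (d+1)$ and Hölder's inequality,
\begin{align*}
    \sum _i \varepsilon _i ^{d-1} \le \Big( \sum _i \varepsilon _i ^{d-2} \Big) ^{2/3} \Big( \sum _i \varepsilon _i ^{d+1} \Big) ^{1/3} \lesssim \Big( \frac{D}{\eta ^2 \nu ^2} \Big) ^{2/3} (\nu\, t S) ^{1/3},
\end{align*}
so this part of the pairing is $\lesssim \eta \Vb \nu \sum _i \varepsilon _i ^{d-1} \lesssim \Vb\, D ^{2/3} (T S) ^{1/3}$ for fixed $\eta$ (using $t S \le T S$), the first asserted term. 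On maximal boxes I would instead retain the true gradient, $\eta _i \nu \varepsilon _{\max} ^{-2} = (\fint _{Q _{2 \varepsilon _{\max}} ^+} \lvert \nabla \unu \rvert ^2) ^{1/2}$, so box $i$ contributes $\lesssim \Vb \nu ^{1/2} \varepsilon _{\max} ^{d/2} (\int _{Q _{2 \varepsilon _{\max}} ^+} \lvert \nabla \unu \rvert ^2) ^{1/2}$; by Cauchy--Schwarz, the maximal box count $\lesssim \nu t S / \varepsilon _{\max} ^{d+1}$, and $\varepsilon _{\max} = \nu / (2 \Ub)$,
\begin{align*}
    \sum _i \varepsilon _{\max} ^{d/2} \Big( \int _{Q _{2 \varepsilon _{\max}} ^+} \lvert \nabla \unu \rvert ^2 \Big) ^{1/2} \lesssim (\Ub\, t S) ^{1/2} (D / \nu) ^{1/2},
\end{align*}
giving $\lesssim \Ub ^{1/2} \Vb\, D ^{1/2} (T S) ^{1/2}$, the second asserted term.

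The main obstacle is the construction and bookkeeping of the covering: producing the stopping-time/Vitali decomposition of $(t _0, t) \times \Gamma ^-$ with controlled overlap so that both $\sum _i \varepsilon _i ^{d+1} \lesssim \nu t S$ and the dissipation packing $\sum _i \nu \int _{Q _{2 \varepsilon _i} ^+} \lvert \nabla \unu \rvert ^2 \lesssim D$ hold simultaneously, and transplanting \cref{lem:local} from the model flat boundary to the curved geometry via boundary charts, which perturbs the rescaled system only by lower-order, $\eta$-controllable terms. This is precisely the step deferred to \cref{app:cz}. The remaining ingredients---the saturated balance yielding the exponent $\tfrac23$, the Cauchy--Schwarz packing on the maximal layer, and the collection of the $\cO(\nu)$, $\cO(\nu ^{1/4})$, and initial-epoch errors into $R _\nu$---are then routine.
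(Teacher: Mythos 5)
Your overall architecture coincides with the paper's: a stopping-time (Calder\'on--Zygmund) covering of the space-time outflow boundary at maximal scale $\sim \nu/\Ub$, \cref{lem:local} on each stopped box, an initial epoch absorbed into $R_\nu$ via the compactness argument of \cref{sub:first-epoch}, the oscillation of $\bV$ controlled by the trace estimate, and a split between interior (saturated) and maximal boxes. Your interior-box accounting --- using the parent-box saturation to get $\sum_i \varepsilon_i^{d-2} \lesssim D/(\eta^2\nu^2)$ with $D := \nu\int_0^t\int_{\Gammanuu}\lvert\nabla\unu\rvert^2\,\d x\,\d s$, then H\"older with $d-1=\tfrac23(d-2)+\tfrac13(d+1)$ --- is a legitimate alternative to the paper's route (weak-$L^{3/2}$ bound on the averaged trace in \cref{lem:l32weaknew}, paired against $\bV\in L^{3,1}$ by Lorentz duality), and it reproduces the first term correctly.

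The genuine gap is in your treatment of the maximal boxes. You claim \cref{lem:local} yields $\lvert\fint_{\bar Q_{\varepsilon_i}}\partial_n\unu\rvert \le c\,\eta_i\,\nu\varepsilon_i^{-2}$ with $\eta_i$ the \emph{actual} normalized local gradient, but the lemma only gives the bound with the a priori threshold $\eta$, and this is not a removable artifact: its proof produces $\mathcal O(\eta)$ contributions that do not scale with the local dissipation, namely the rescaled forcing term $\tfrac{\varepsilon^3}{\nu^2}(\partial_t\bU+\bU\cdot\nabla\bU-\nu\Delta\bU)$, which is bounded by $\eta$ through the standing smallness hypotheses on $\nu$, and the final correction $\lVert\partial_n\bU\rVert_\infty \le \eta\nu\varepsilon^{-2}$ converting $\partial_n\vnu$ back to $\partial_n\unu$. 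So each maximal box carries an irreducible $c\,\eta\,\nu\varepsilon_{\max}^{-2}$ in addition to your $\eta_i$ piece; summing $\Vb\,\eta\,\nu\,\varepsilon_{\max}^{d-1}$ over the $\sim \nu T S\,\varepsilon_{\max}^{-(d+1)}$ maximal boxes gives an error of size $\eta\,\Ub^2\Vb\, T S$, which for \emph{fixed} $\eta$ neither vanishes as $\nu\to 0$ nor is dominated by the asserted right-hand side (since $\Vb\le\beta\Ub$, it is at least $(\eta/\beta)\,\Ub\Vb^2 TS$, i.e.\ comparable to the main dissipation rate). The repair is exactly what the paper does in \cref{lem:boundary-work}: keep this threshold-level term as $c\,\nu\eta\,\Vb S$ per time slab and \emph{optimize over} $\eta\in(0,1]$ against the interior term; the resulting geometric mean is precisely the second term $\Ub^{1/2}\Vb\, D^{1/2}(TS)^{1/2}$, so your Cauchy--Schwarz packing on the maximal layer becomes unnecessary. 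A secondary issue you gloss over: boxes of the maximal scale have parabolic depth $\varepsilon_{\max}^2/\nu = \nu/(4\Ub^2)$, so near small times they protrude past $t=0$; a single covering at scale $\varepsilon_{\max}$ cannot reach down to $t_0=\mathcal O(\nu^2)$. This is why the paper inserts the intermediate epoch $[T_1,T_2]$ with a geometric (dyadic-in-time) partition and slab-dependent scale $\varepsilon=\sqrt{\nu\tau}$ (\cref{sub:second-epoch}); your covering construction would need the same device.
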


We observe that by construction $\bV$ is divergence free and $V \normal = 0$.
To prove this proposition, we set $T _2 := \nu \Ub ^{-2} / 4$ and $T _1 := T _2 ^2 / T = \cO (\nu ^2)$. From the assumption $\nu < T \Ub ^{-2}$ we have $T _1 < T _2 < T$. Then the time interval $[0, T]$ is separated into three parts:
\begin{align*}
    [0, T] = [0, T _1] \cup [T _1, T _2] \cup [T _2, T],
\end{align*}
The proof of \cref{prop:boundary-work} will be accomplished in three cases: $t \in [0, \Tone]$, $t \in [\Tone, \Ttwo]$, and $t \in [\Ttwo, T]$. We separate each case in three subsections, using the following key lemma.

\begin{lemma} 
    \label{lem:boundary-work}
    Let $\tau < \nu \Ub ^{-2} / 4$ be positive. 
    If $\unu$ is a Leray--Hopf weak solution to \eqref{eqn:Navier-Stokes} in $(\tau / 4, 5 \tau / 4) \times \Omega$, then
    \begin{align*}
        \left\lvert 
            \nu \int _{\tau} ^{5 \tau / 4} \int _{\Gamma ^-} \partial _n \unu \cdot \bV \d \sigma \d s
        \right\rvert &\lesssim \kappa \nu ^2 \left(
            \int _{\tau} ^{5 \tau / 4} \int _{\Gamma ^-} \lvert \partial _n \unu \rvert ^\frac43 \d \sigma \d s
        \right) ^\frac34 \tau ^\frac14 S ^{\frac14} \\
        & \qquad + \left( 
            \nu \int _{\tau / 4} ^{5 \tau / 4} \int _{\Gammanuu} \lvert \nabla \unu \rvert ^2 \d x \d s
        \right) ^\frac23 \Vb \tau  ^\frac13 S ^\frac13 \\
        & \qquad + \nu ^\frac14 \left( 
            \nu \int _{\tau / 4} ^{5 \tau / 4} \int _{\Gammanuu} \lvert \nabla \unu \rvert ^2 \d x \d s
        \right) ^\frac12 \Vb \tau ^\frac14 S ^\frac12,
    \end{align*}
    where $\kappa = \lVert \nabla \bV \rVert _\infty \Ub ^{-1} + \lVert \partial _t \bV \rVert _\infty \Ub ^{-2}$ and $\Gammanuu$ is as in \cref{thm:main}.
\end{lemma}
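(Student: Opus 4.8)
The plan is to prove \cref{lem:boundary-work} by covering the boundary portion $\Gamma^-$ with a Calder\'on--Zygmund style decomposition into boxes at the space-scale $\varepsilon$ dictated by the local estimate \cref{lem:local}, and then summing the local contributions. The central mechanism is that \cref{lem:local} converts an $L^2$ bound on $\nabla\unu$ over a parabolic box into a pointwise-in-average bound on the normal derivative $\partial_n\unu$, provided the local enstrophy is below the critical threshold $\eta\nu\varepsilon^{-2}$. So the first step is to \emph{sort the boundary points by the size of the local velocity gradient}: fix the time-window $(\tau/4,5\tau/4)$ and, for each $\bx^*\in\Gamma^-$, choose the largest admissible scale $\varepsilon(\bx^*)\le \nu/(2\Ub)$ at which the averaged enstrophy over $Q_{2\varepsilon}^+(t^*,\bx^*)$ first reaches the threshold. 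Where no such scale exists (the enstrophy stays subcritical down to the finest scale), \cref{lem:local} applies directly and bounds $\partial_n\unu$; this is the ``good'' set. Where the threshold is met at some $\varepsilon(\bx^*)>0$, the box itself carries a definite amount of enstrophy, $\gtrsim \eta^2 \nu^2 \varepsilon^{d+1}\nu^{-1}$ after unscaling, which we can charge against the enstrophy integral on the right-hand side; this is the ``bad'' set.

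First I would set up the decomposition cleanly, deferring the combinatorial Vitali-type covering to \cref{app:cz} as the paper announces. On the good set, pairing $\partial_n\unu$ against $\bV$ and using $|\bV|\le\Vb$ together with the local bound $|\fint \partial_n\unu|\lesssim \eta\nu\varepsilon^{-2}$ produces, after integrating the local averages against the measure on $\Gamma^-$ and over time, a contribution controlled by $\bigl(\nu\int_{\tau/4}^{5\tau/4}\int_{\Gammanuu}|\nabla\unu|^2\bigr)^{2/3}\Vb\tau^{1/3}S^{1/3}$ — the second term in the statement. The exponent $2/3$ is exactly what one expects from the scaling in \cref{tab:scale}: the local estimate gives $\partial_n\unu\sim\nu\varepsilon^{-2}$ while the local enstrophy scales like $\nu\varepsilon^{-2}\cdot\varepsilon^{d+1}/\nu$, and optimizing the split of the total enstrophy budget across scales yields the sublinear power. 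The bad set contributes the last two terms: here I would use the trace quantity $\int |\partial_n\unu|^{4/3}$ on the boxes where the gradient is large, interpolate using the threshold relation between $\varepsilon$ and the local enstrophy, and pick up the factor $\nu^{1/4}$ and the $\kappa$-weighted term reflecting the regularity of $\bV$ through $\kappa=\lVert\nabla\bV\rVert_\infty\Ub^{-1}+\lVert\partial_t\bV\rVert_\infty\Ub^{-2}$.

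The time-window is deliberately widened to $(\tau/4,5\tau/4)$ while the estimate is stated over $(\tau,5\tau/4)$: this is because the local parabolic boxes $Q_{2\varepsilon}^+$ reach backward in time by $(2\varepsilon)^2/\nu\le \nu/\Ub^2 \lesssim \tau$, so the enstrophy we charge against lives on the fatter interval. I would verify that the constraint $\tau<\nu\Ub^{-2}/4$ guarantees $\varepsilon\le\nu/(2\Ub)$ keeps the boxes inside the layer $\Gammanuu$, which is what makes the enstrophy integrals on the right-hand side only involve $\Gammanuu$ rather than all of $\Omega$. I would also confirm that the smallness hypotheses of \cref{lem:local} on $\nu\lVert\nabla\bU\rVert_\infty$, $\nu\lVert\partial_t\bU\rVert_\infty$, and $\nu^2\lVert\Delta\bU\rVert_\infty$ hold for $\nu$ small given $\bU\in C^1([0,T];C^2(\bar\Omega))$, so that the local estimate is applicable uniformly over the boundary.

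The hard part will be the combinatorial bookkeeping of the covering: ensuring the selected boxes have bounded overlap so that summing local enstrophies does not overcount the global enstrophy integral by more than a constant factor, and matching the three distinct scaling exponents ($2/3$, $1/2$, and the $3/4$ trace term) to the correct geometric series over dyadic scales. This is precisely the Calder\'on--Zygmund stopping-time argument that I would organize in \cref{app:cz}, following the template of \cite{vasseur2023,vasseur2024}; the genuinely new feature here, compared to the impermeable case, is that the pairing is against the \emph{tangential} defect $\bV=\uE-\bU$ with $V\normal=0$, which is what lets the suction strength $\Ub$ enter only through the scale $\nu/\Ub$ and the threshold, rather than through the magnitude of the paired field.
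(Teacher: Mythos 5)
Your overall frame---a stopping-time Calder\'on--Zygmund decomposition of $(\tau,5\tau/4)\times\Gamma^-$ combined with \cref{lem:local}, with the enlarged window $(\tau/4,5\tau/4)$ absorbing the backward-in-time reach of the parabolic boxes---is indeed the paper's strategy, and your consistency checks (boxes staying inside $\Gammanuu$, smallness hypotheses on $\bU$) are correct. But the way you distribute the three terms across your ``good/bad'' dichotomy reveals a genuine gap in the mechanism. On your good set (boxes subcritical at the top scale $\varepsilon\sim\sqrt{\nu\tau}$), \cref{lem:local} gives $\lvert\fint\partial_n\unu\rvert\lesssim\eta\nu\varepsilon^{-2}=\eta\tau^{-1}$, so pairing against $\lvert\bV\rvert\le\Vb$ yields at most $c\,\eta\nu\Vb S$ --- the harmless below-threshold contribution, \emph{not} the $2/3$-power term you attribute to it. The $2/3$ term actually comes from the regions you call bad: there the paper keeps subdividing until the subcritical criterion holds (this happens up to a null set), applies \cref{lem:local} on every final box, and then proves that the piecewise-constant field $\tomega^\nu$ of box averages satisfies a \emph{weak}-$L^{3/2}$ bound (\cref{lem:l32weaknew}), by counting boxes at each dyadic scale against the maximal function of the enstrophy density; the supercritical-parent lower bound is what makes that count work. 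Pairing $\nu\tomega^\nu$ above the threshold with $\bV\in L^{3,1}$ by $L^{3/2,\infty}\times L^{3,1}$ Lorentz duality produces the term $\bigl(\nu\int\int_{\Gammanuu}\lvert\nabla\unu\rvert^2\bigr)^{2/3}\Vb\tau^{1/3}S^{1/3}$, and the $\nu^{1/4}$ term then arises from optimizing the free parameter $\eta\in(0,1]$ between this and $c\,\eta\nu\Vb S$. None of this machinery---the weak-type estimate, the Lorentz duality, the $\eta$-optimization---appears in your plan, and ``optimizing the split of the enstrophy budget across scales'' does not by itself supply it.

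The second gap is quantitative and fatal to your bad-set strategy as described. The $\kappa$-weighted term does not come from ``boxes where the gradient is large'': it is the error from replacing $\bV$ by its box averages, incurred on \emph{every} box of the partition, and it is small only because the oscillation of $\bV$ over a box of size $\varepsilon_i\times\varepsilon_i^2/\nu$ is bounded by $\kappa\nu$; pairing that $\kappa\nu$-small factor with the $L^{4/3}$ trace of $\partial_n\unu$ is what yields $\kappa\nu^2\bigl(\int\lvert\partial_n\unu\rvert^{4/3}\bigr)^{3/4}\tau^{1/4}S^{1/4}$. If instead you pair the raw trace against $\bV$ itself on the bad set, H\"older gives $\nu\Vb\lVert\partial_n\unu\rVert_{L^{4/3}}\lvert\mathrm{bad}\rvert^{1/4}$ with no $\kappa\nu$ gain; since the only available control is the trace estimate \eqref{est:trace}, $\nu\lVert\partial_n\unu\rVert_{L^{4/3}}\lesssim\KE\nu^{-3/4}$, and the total measure of the bad regions summed over the $\sim T\Ub^2/\nu$ time windows is only $O(1)$, this contribution grows like $\nu^{-3/4}$ as $\nu\to0$ and cannot be absorbed into any of the three terms of \cref{lem:boundary-work}. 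In short: the large-gradient regions must also be handled through box averages (via further subdivision and \cref{lem:local}), never through the raw trace paired with a $\Vb$-sized field. A minor additional slip: the enstrophy a supercritical box carries is $\gtrsim\eta^2\nu\varepsilon^{d-2}$ after unscaling, not $\eta^2\nu\varepsilon^{d+1}$; the correct exponent is what makes the scale-by-scale measure count in \cref{app:cz} close.
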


The proof of this lemma relies on a dyadic decomposition of the boundary that is almost identical to \cite{vasseur2023,vasseur2024}. We give details of the proof in \cref{app:cz}. For now, we will invoke \cref{lem:boundary-work} as a black box and return to the proof of \cref{prop:boundary-work}.

\subsection{Case \texorpdfstring{$t \in [0, \Tone]$}{t in [0, T1]}}
\label{sub:first-epoch}

In this subsection, we show that if $t \le T _1$, then the integral 
\renewcommand{\Tone}{t}
\begin{align*}
    I _1 &:= \nu \int _0 ^{\Tone} \int _{\Gamma ^-} \partial _n \unu \cdot \bV \d \sigma \d s \to 0
\end{align*}
vanishes as $\nu \to 0$. To see this fact, we first note that 
\begin{align*}
    I _1 &= \nu \int _0 ^{\Tone} \int _\Omega \Delta \unu \cdot \bV \d \bx \d s + \nu \int _0 ^{\Tone} \int _\Omega \nabla \unu : \nabla \bV \d \bx \d s
    \\
    &= I_{1,1}+I_{1,2}.
\end{align*}
We observe that $I_{1,2}$ vanishes as $\nu \to 0$. Indeed, for $\nabla \unu \in L ^2(0,T;L^2(\Omega))$ and $\nabla \bV \in L ^\infty((0,T)\times \Omega)$:
\begin{align*}
    \left|I_{1,2}\right|
    \le \nu \lVert \nabla \unu \rVert _{L ^2 (0, T;L^2(\Omega))} \lVert \nabla \bV \rVert _{L ^\infty((0,T)\times\Omega)} (\Tone |\Omega|) ^\frac12 = \cO (\nu ^\frac32),
\end{align*}
where we applied \eqref{est:energy-inequality} to bound $\lVert \nabla \unu \rVert _{L ^2 ((0, T) \times \Omega)}$. On the other hand, for the first term, we use the equation for $\unu$:
\begin{align*}
    I_{1,1} &= \int _0 ^{\Tone} \int_{\Omega}\partial _t \unu \cdot \bV \d \bx \d s + \int _0 ^{\Tone} \int_{\Omega} \unu \cdot \nabla \unu \cdot \bV \d \bx \d s
    \\
    &=I_{1,1}^a+I_{1,1}^b
\end{align*}
Note that we invoked the fact that $\bV$ is divergence-free and tangent to the boundary, which implies that it is orthogonal to the pressure gradient $\nabla \pnu$ in $L ^2 (\Omega)$. The second term above is again vanishing as $\nu \to 0$ as
\begin{align*}
   I_{1,1}^b
    & \le \lVert \unu \rVert _{L ^\infty (0, T; L ^2 (\Omega))} \lVert \nabla \unu \rVert _{L ^2 (0, T; L ^2 (\Omega))} 
    \lVert \bV \rVert _{L ^\infty (0, T; L ^\infty (\Omega))} \Tone^{\frac12} = \cO (\nu ^\frac12).
\end{align*}
We next bound the first term $I _{1, 1} ^a$, which via an integration by parts becomes
\begin{align*}
    \int _0 ^{\Tone} \int_{\Omega}\partial _t \unu \cdot \bV \d \bx \d s = (\unu (\Tone), \bV (\Tone)) - (\unu (0), \bV (0)) - \int _0 ^{\Tone}\int_{\Omega} \unu \cdot \partial _t \bV \d \bx \d s.
\end{align*}
Here $(\cdot, \cdot)$ stands for $L ^2 (\Omega)$-inner product. Then this last term is vanishing since
\begin{align*}
    \left \lvert 
        \int _0 ^{\Tone} \int_{\Omega}\unu \cdot \partial _t \bV \d \bx \d s 
    \right \rvert 
    & \le \lVert \unu \rVert _{L ^\infty (0,T; L ^2 (\Omega))} \lVert \partial _t \bV \rVert _{L ^\infty (0, T; L ^\infty (\Omega))} \Tone = \cO (\nu ^2).
\end{align*}

It remains to prove 
\begin{align*}
    (\unu (\Tone), \bV (\Tone)) - (\unu (0), \bV (0)) \to 0 \qquad \text{as } \nu \to 0.
\end{align*}
Because $\{\unu\} _\nu$ is uniformly bounded in $L ^\infty (0, T; L ^2 (\Omega))$ and $\bV$ is strongly continuous in $L ^2 (\Omega)$, we have
\begin{align*}
    (\unu (\Tone), \bV (\Tone)) - (\unu (\Tone), \bV (0)) \to 0 \qquad \text{as } \nu \to 0.
\end{align*}
Therefore, it suffices to show
\begin{align*}
    (\unu (\Tone) - \unu (0), \bV (0)) \to 0 \qquad \text{as } \nu \to 0.
\end{align*}

From the equation satisfied by $\unu$, it is straightforward to see that $\{\partial _t \unu\} _\nu$ is uniformly bounded in $L ^{\frac43} (0, T; H ^{-10} (\Omega))$. Because $\{\unu\} _\nu$ is uniformly bounded in $L ^\infty (0, T; L ^2 (\Omega))$, we know that $\unu$ converges strongly in $C (0, T; H ^{-1} (\Omega))$ up to a subsequence by the Aubin--Lions Lemma. Therefore, for any $\boldsymbol W \in H ^1 _0 (\Omega)$, it holds that 
\begin{align*}
    (\unu (\Tone) - \unu (0), \boldsymbol W) \to 0 \qquad \text{as } \nu \to 0.
\end{align*}
Since $\{\unu\} _\nu$ is uniformly bounded in $L ^\infty (0, T; L ^2 (\Omega))$, we can approximate $\bV (0)$ by $H _0 ^1 (\Omega)$-functions in $L ^2 (\Omega)$, and the convergence holds for $\bV (0)$ as well. This completes the proof of $I _1 \to 0$.
\renewcommand{\Tone}{T _1}

\subsection{Case \texorpdfstring{$t \in [\Tone, \Ttwo]$}{t in [T1, T2]}}
\label{sub:second-epoch}

We choose $N \in \mathbb N$ such that 
\begin{align*}
    \frac45 T _1 \le T _1 ' := \left(\frac45\right) ^{N} t \le \Tone.
\end{align*}
With such a selection, we decompose
\begin{align*}
    \nu \int _{0} ^{t} \int _{\Gamma ^-} \partial _n \unu \cdot \bV \d \sigma = \nu \int _{0} ^{T _1'} \int _{\Gamma ^-} \partial _n \unu \cdot \bV \d \sigma + \nu \int _{T _1'} ^{t} \int _{\Gamma ^-} \partial _n \unu \cdot \bV \d \sigma = I _1' + I _2,
\end{align*}
and we show both integrals vanish as $\nu \to 0$. As $T _1' \le T _1$, $I _1'$ tends to zero as in \cref{sub:first-epoch}. To show the second integral vanishes, we partition 
\begin{align*}
    [T _1', t] = \bigcup _{k = 0} ^{N - 1} \left[
    \left(\frac54\right) ^k T _1', \left(\frac54\right) ^{k + 1} T _1'
    \right).
\end{align*}
We set $t _k = (5 / 4) ^k T _1'$. Using \cref{lem:boundary-work} with $\tau = t _k$, we conclude that
\begin{align*}
    \left\lvert 
        \nu \int _{t _k} ^{t _{k + 1}} \int _{\Gamma ^-} \partial _n \unu \cdot \bV \d \sigma \d s
    \right\rvert &\lesssim \kappa \nu ^2 \left(
        \int _{t _k} ^{t _{k + 1}} \int _{\Gamma ^-} \lvert \partial _n \unu \rvert ^\frac43 \d \sigma \d s
    \right) ^\frac34 t _k ^\frac14 S ^{\frac14} \\
    & \qquad + \left( 
        \nu \int _{t _k / 4} ^{t _{k + 1}} \int _{\Gammanuu} \lvert \nabla \unu \rvert ^2 \d x \d s
    \right) ^\frac23 \Vb t _k  ^\frac13 S ^\frac13 \\
    & \qquad + \nu ^\frac14 \left( 
        \nu \int _{t _k / 4} ^{t _{k + 1}} \int _{\Gammanuu} \lvert \nabla \unu \rvert ^2 \d x \d s
    \right) ^\frac12 \Vb t _k ^\frac14 S ^\frac12.
\end{align*}
It then follows from the trace estimate \eqref{est:trace} for $\partial _n \unu$ that
\begin{align*}
    \kappa \nu ^2 \left(
        \int _{t _k} ^{t _{k + 1}} \int _{\Gamma ^-} \lvert  \partial _n \unu \rvert ^\frac43 \d \sigma \d s
    \right) ^\frac34 t _k ^\frac14 S ^{\frac14} &\lesssim  
    \kappa \nu \left(
        \KE \nu ^{-\frac34} + \KE^{\frac12}|\Omega|^{\frac13} t _k ^{-\frac14}
    \right) t _k ^\frac14 S ^{\frac14} =\cO(\nu^{\frac12}).
\end{align*}
where we also invoked that $t _k \le T _2 = \nu \Ub ^{-2} / 4$. Moreover, we recall that the quantity $\nu \lVert \nabla \unu \rVert _{L ^2 ((0, T) \times \Omega)} ^2$ are uniformly bounded in $\nu$. Hence we have 
\begin{align*}
    \left( 
        \nu \int _{t _k / 4} ^{t _{k + 1}} \int _{\Gammanuu} \lvert \nabla \unu \rvert ^2 \d x \d s
    \right) ^\frac23 \Vb t _k  ^\frac13 S &\lesssim \KE^{\frac23}\Vb S\frac{\nu^{\frac13}}{\Ub^{\frac23}}= \cO (\nu ^\frac13), \\
    \nu ^\frac14 \left( 
        \nu \int _{t _k / 4} ^{t _{k + 1}} \int _{\Gammanuu} \lvert \nabla \unu \rvert ^2 \d x \d s
    \right) ^\frac12 \Vb t _k ^\frac14 S ^\frac12 &\lesssim \nu^{\frac14} \KE^{\frac12}\Vb\frac{\nu^\frac14}{\Ub^\frac12} S^{\frac12}= \cO (\nu ^\frac12).
\end{align*}
To summarize, we obtain
\begin{align*}
    \left\lvert 
        \nu \int _{t _k} ^{t _{k + 1}} \int _{\Gamma ^-} \partial _n \unu \cdot \bV \d \sigma \d s
    \right\rvert = \cO (\nu ^\frac13).
\end{align*}
Since $N = \cO (\log (T _2 / T _1)) = \cO (\log (T \Ub ^2 / \nu))$, $I _2$ can be bounded by 
\begin{align*}
    |I _2| \le \sum _{k = 0} ^{N - 1} \left\lvert 
        \nu \int _{t _k} ^{t _{k + 1}} \int _{\Gamma ^-} \partial _n \unu \cdot \bV \d \sigma \d s
    \right\rvert = \cO (\nu ^\frac13 \log(T \Ub ^2 / \nu)) \to 0 \qquad \text{as } \nu \to 0.
\end{align*}

\subsection{Case \texorpdfstring{$t \in [\Ttwo, T]$}{t in [T2, T]}}

We choose $M \in \mathbb N$ such that 
\begin{align*}
    \frac78 T _2 \le T _2' := t - \frac M8 T _2 \le T _2.
\end{align*}
With such a selection, we decompose
\begin{align*}
    \nu \int _{0} ^{t} \int _{\Gamma ^-} \partial _n \unu \cdot \bV \d \sigma = \nu \int _{0} ^{T _2'} \int _{\Gamma ^-} \partial _n \unu \cdot \bV \d \sigma + \nu \int _{T _2'} ^{t} \int _{\Gamma ^-} \partial _n \unu \cdot \bV \d \sigma = I _2' + I _3.
\end{align*}
$I _2' \to 0$ as $\nu \to 0$ by arguing as in \cref{sub:second-epoch}, so the main task is to estimate $I _3$. To find an upper bound, we set $t _0 = T _2 / 8$ and split $[T _2', t]$ evenly:
\begin{align*}
    [T _2', t] = \bigcup _{k = 0} ^{M - 1} [T _2' + k t _0, T _2' + (k + 1) t _0].
\end{align*}
We remark the partition used in \cref{sub:second-epoch} is a geometric sequence, whereas this partition is an arithmetic sequence. 

We denote $t _k = T _2' + k t _0$ for $k \in \mathbb Z$. Using \cref{lem:boundary-work} with $\tau = 4 t _0$, and shifting the initial time to $t _k - 4 t _0 = t _{k - 4}$, we have that
\begin{align*}
    \left\lvert 
        \nu \int _{t _k} ^{t _{k + 1}} \int _{\Gamma ^-} \partial _n \unu \cdot \bV \d \sigma \d s
    \right\rvert &\lesssim \kappa \nu ^2 \left(
        \int _{t _k} ^{t _{k + 1}} \int _{\Gamma ^-} \lvert \partial _n \unu \rvert ^\frac43 \d \sigma \d s
    \right) ^\frac34 t _0 ^\frac14 S ^{\frac14} \\
    & \qquad + \left( 
        \nu \int _{t _{k - 3}} ^{t _{k + 1}} \int _{\Gammanuu} \lvert \nabla \unu \rvert ^2 \d x \d s
    \right) ^\frac23 \Vb t _0  ^\frac13 S ^\frac13 \\
    & \qquad + \nu ^\frac14 \left( 
        \nu \int _{t _{k - 3}} ^{t _{k + 1}} \int _{\Gammanuu} \lvert \nabla \unu \rvert ^2 \d x \d s
    \right) ^\frac12 \Vb t _0 ^\frac14 S ^\frac12.
\end{align*}
First, we use H\"older's inequality to bound 
\begin{align*}
    I _{3, 1} &:= \sum _{k = 0} ^M \kappa \nu ^2 \left(
        \int _{t _k} ^{t _{k + 1}} \int _{\Gamma ^-} \lvert \partial _n \unu \rvert ^\frac43 \d \sigma \d s
    \right) ^\frac34 t _0 ^\frac14 S ^{\frac14} \\
    & \le \kappa \nu ^2 \left(
        \sum _{k = 0} ^M \int _{t _k} ^{t _{k + 1}} \int _{\Gamma ^-} \lvert \partial _n \unu \rvert ^\frac43 \d \sigma \d s
    \right) ^{\frac34} \left(
        \sum _{k = 0} ^M t _0 S
    \right) ^\frac14 \\
    &= \kappa \nu ^2 \lVert \partial _n \unu \rVert _{L ^\frac43 ((T _2', T) \times \Gamma ^-)} (T S) ^\frac14 = \cO (\nu ^\frac14 \lvert \log(\nu/(T\Ub^2)) \rvert ^\frac34) \to 0
\end{align*}
which converges to $0$ as $\nu \to 0$. Note that we applied the trace estimate \eqref{est:trace-2} for $\partial _n \unu$ to obtain the final relation.

For the second term, we use H\"older's inequality again to obtain
\begin{align*}
    I _{3, 2} &:= \sum _{k = 0} ^M \left( 
        \nu \int _{t _{k - 3}} ^{t _{k + 1}} \int _{\Gammanuu} \lvert \nabla \unu \rvert ^2 \d x \d s
    \right) ^\frac23 \Vb t _0  ^\frac13 S ^\frac13 \\
    & \le \Vb
    \left( \sum _{k = 0} ^M 
        \nu \int _{t _{k - 3}} ^{t _{k + 1}} \int _{\Gammanuu} \lvert \nabla \unu \rvert ^2 \d x \d s
    \right) ^\frac23 \left(
        \sum _{k = 0} ^M t _0 S
    \right) ^\frac13 \\
    & \lesssim  \Vb \left( 
        \nu \int _0 ^T \int _{\Gammanuu} \lvert \nabla \unu \rvert ^2 \d x \d s
    \right) ^\frac23 \left(
        T S
    \right) ^\frac13.
\end{align*}
Similarly, because $\nu\sim t _0 \Ub ^2$, we have 
\begin{align*}
    I _{3, 3} &:= \nu ^\frac14 \sum _{k = 0} ^M \left( 
        \nu \int _{t _{k - 3}} ^{t _{k + 1}} \int _{\Gammanuu} \lvert \nabla \unu \rvert ^2 \d x \d s
    \right) ^\frac12 \Vb t _0  ^\frac14 S ^\frac12 \\
    & \lesssim \Ub ^\frac12 \Vb \left( 
        \nu \int _0 ^T \int _{\Gammanuu} \lvert \nabla \unu \rvert ^2 \d x \d s
    \right) ^\frac12 \left(
        T S
    \right) ^\frac12.
\end{align*}
Combining these estimates yields
\begin{align*}
    |I _3| & \lesssim \Vb \left( 
        \nu \int _0 ^T \int _{\Gammanuu} \lvert \nabla \unu \rvert ^2 \d x \d s
    \right) ^\frac23 \left(
        T S
    \right) ^\frac13 \\
    & \qquad + \Ub ^\frac12 \Vb \left( 
        \nu \int _0 ^T \int _{\Gammanuu} \lvert \nabla \unu \rvert ^2 \d x \d s
    \right) ^\frac12 \left(
        T S
    \right) ^\frac12 + \cO (\nu ^\frac14 \lvert \log(\nu/(T\Ub^2)) \rvert ^\frac34).
\end{align*}
Together with the first two cases, we conclude the proof of \cref{prop:boundary-work}.

\section{Proof of main result}
\label{sec:main-proof}

We are ready to prove the main result.

\begin{proof}[Proof of \cref{thm:main}]

By applying \cref{prop:boundary-work} to equation \eqref{eqn:layer-separation-rate-2} in \cref{sec:boundary-layer}, we have the bound
\begin{align*}
    & \hspace{-2em} \lVert \unu - \uE \rVert _{L ^2 (\Omega)} ^2 (t) 
    + \nu \int _0 ^t \lVert \nabla \unu \rVert _{L ^2 (\Omega)} ^2 (s) \d s 
    + \frac12 \int _0 ^t \int _{\Gamma ^-} \lvert \bV \rvert ^2 U \normal \d \sigma \d s \\
    & \le 2 \int _0 ^t \lVert \nabla \uE (s) \rVert _{L ^\infty (\Omega)} \lVert \unu - \uE \rVert _{L ^2 (\Omega)} ^2 (s) \d s \\
    & \qquad + c \Vb \left( 
        \nu \int _0 ^t \int _{\Gammanuu} \lvert \nabla \unu \rvert ^2 \d x \d s
    \right) ^\frac23 \left(
        T S
    \right) ^\frac13 \\
    & \qquad + c \Ub ^\frac12 \Vb \left( 
        \nu \int _0 ^t \int _{\Gammanuu} \lvert \nabla \unu \rvert ^2 \d x \d s
    \right) ^\frac12 \left(
        T S
    \right) ^\frac12 + R _\nu ^1
\end{align*}
for some non-dimensional positive constant $c$ and a remainder term 
\begin{align*}
    R _\nu ^1 = \lVert \unu - \uE \rVert _{L ^2 (\Omega)} ^2 (0) + \nu \int _0 ^T \lVert \nabla \uE \rVert _{L ^2 (\Omega)} ^2 (s) \d s + R _\nu
\end{align*}
that tends to zero as $\nu \to 0$. We recall that $K \geq \exp (2 \int _0 ^T \lVert \nabla \uE (t) \rVert _{L ^\infty (\Omega)} \d t)$. 

Next, we can bound two terms on the right-hand side as follows:
\begin{align*}
    & c \Vb \left( 
        \nu \int _0 ^t \int _{\Gammanuu} \lvert \nabla \unu \rvert ^2 \d x \d s
    \right) ^\frac23 \left(
        S T
    \right) ^\frac13 \\
    & \qquad \le \frac\nu4 \int _0 ^t \lVert \nabla \unu \rVert _{L ^2 (\Omega)} ^2 (s) \d s + c S T \Vb ^3 \\
    & \qquad = \frac\nu4 \int _0 ^t \lVert \nabla \unu \rVert _{L ^2 (\Omega)} ^2 (s) \d s + c \beta S T \Ub \Vb ^2, \\
    & c \Ub ^\frac12 \Vb \left( 
        \nu \int _0 ^t \int _{\Gammanuu} \lvert \nabla \unu \rvert ^2 \d x \d s
    \right) ^\frac12 \left(
        S T
    \right) ^\frac12 \\
    & \qquad \le \frac\nu4 \int _0 ^T \lVert \nabla \unu \rVert _{L ^2 (\Omega)} ^2 (s) \d s + c S T \Ub \Vb ^2.
\end{align*}
Therefore,
\begin{align*}
    & \lVert \unu - \uE \rVert _{L ^2 (\Omega)} ^2 (t) 
    + \frac\nu2 \int _0 ^T \lVert \nabla \unu \rVert _{L ^2 (\Omega)} ^2 (s) \d s \\
    & \qquad \le 2 \int _0 ^t \lVert \nabla \uE (s) \rVert _{L ^\infty (\Omega)} \lVert \unu - \uE \rVert _{L ^2 (\Omega)} ^2 (s) \d s + c (1 + \beta) S T \Ub \Vb ^2 + R _\nu ^1.
\end{align*}
This estimate holds for all $t \in [0, T]$. By Gr\"onwall's inequality, we conclude that 
\begin{align*}
    \lVert \unu - \uE \rVert _{L ^2 (\Omega)} ^2 (T) 
    + \nu \int _0 ^T \lVert \nabla \unu \rVert _{L ^2 (\Omega)} ^2 (s) \d s \le c K \left(
        c (1 + \beta) S T \Ub \Vb ^2 + R _\nu ^1
    \right).
\end{align*}
However, the following bounds also hold:
\begin{align*}
    & c \Vb \left( 
        \nu \int _0 ^t \int _{\Gammanuu} \lvert \nabla \unu \rvert ^2 \d x \d t
    \right) ^\frac23 \left(
        S T
    \right) ^\frac13 \\
    & \qquad \le c \left(\frac{c K \Vb}{\gamma \Ub}\right) ^\frac12 \nu \int _0 ^t \lVert \nabla \unu \rVert _{L ^2 (\Gammanuu)} ^2 (s) \d s + \frac\gamma{4 c K} S T \Ub \Vb ^2 \\
    & \qquad \le c \left(\frac{4c K \beta}{\gamma}\right) ^\frac12 \nu \int _0 ^t \lVert \nabla \unu \rVert _{L ^2 (\Gammanuu)} ^2 (s) \d s + \frac\gamma{4 c K} S T \Ub \Vb ^2, \\
    & c \Ub ^\frac12 \Vb \left( 
        \nu \int _0 ^t \int _{\Gammanuu} \lvert \nabla \unu \rvert ^2 \d x \d t
    \right) ^\frac12 \left(
        S T
    \right) ^\frac12 \\
    & \qquad \le c\left(\frac { c K}\gamma\right) \nu \int _0 ^t \lVert \nabla \unu \rVert _{L ^2 (\Gammanuu)} ^2 (s) \d s + \frac\gamma{4 c K} S T \Ub \Vb ^2.
\end{align*}
Upon applying the second condition in \eqref{cond:smooth-lower}, it then follows from Gr\"onwall's inequality that
\begin{align*}
    & \lVert \unu - \uE \rVert _{L ^2 (\Omega)} ^2 (T) 
    + \frac\gamma2 S T \Ub \Vb ^2 \\
    & \qquad \le \frac{c K (1 + \sqrt{\beta \gamma})}{\gamma}  \left(
        \nu \int _0 ^T \int _{\Gammanuu} \lvert \nabla \unu \rvert ^2 \d x \d t + R _\nu ^1
    \right).
\end{align*}

By combining the estimates above and sending $\nu \to 0$, we obtain the following \textit{upper and lower bound} on the energy dissipation rate:
\begin{align*}
    \frac{\gamma ^2}{c K \left(1 + \sqrt{\beta \gamma} \right)} S T \Ub \Vb ^2 &\le \liminf _{\nu \to 0} \int _0 ^T \int _{\Gammanuu} \nu \lvert \nabla \unu \rvert ^2 \d x \d t \\
    & \le \limsup _{\nu \to 0} \int _0 ^T \int _{\Omega} \nu \lvert \nabla \unu \rvert ^2 \d x \d t \le c K (1 + \beta) S T \Ub \Vb ^2,
\end{align*}
and the following {\em upper bound} on the layer separation rate:
\begin{align*}
    \limsup _{\nu \to 0} \lVert \unu - \uE \rVert _{L ^2 (\Omega)} ^2 (T) \le c K (1 + \beta) S T \Ub \Vb ^2.
\end{align*}
This completes the proof of \cref{thm:main}.
\end{proof}

\bibliographystyle{abbrv}
\bibliography{reference.bib}

\begin{thebibliography}{10}

\bibitem{AIM2023}
Aim{PL}: Small scale dynamics in incompressible fluid flows.
\newblock \url{http://aimpl.org/smallscalefluid/9/}, 2023.
\newblock Accessed: 2024-09-06.

\bibitem{alekseenko94}
S.~N. Alekseenko.
\newblock The existence and asymptotic representation of weak solutions of the
  flow problem with the condition of regular slippage along rigid walls.
\newblock {\em Sibirski\u{\i} Matematicheski\u{\i} Zhurnal}, 35(2):235--257, i,
  1994.

\bibitem{antontsev1989}
S.~N. Antontsev, A.~V. Kazhiktov, and V.~N. Monakhov.
\newblock {\em Boundary value problems in mechanics of nonhomogeneous fluids}.
\newblock Elsevier, 1989.

\bibitem{amstromg2025}
S.~Armstrong and V.~Vicol.
\newblock Anomalous diffusion by fractal homogenization.
\newblock {\em Annals of PDE}, 11(1):Paper No. 2, 145, 2025.

\bibitem{bardos1998}
C.~Bardos and J.-M. Ghidaglia.
\newblock Sur les \'{e}quations d'{E}uler bidimensionnelles en \'{e}coulement
  ouvert et conditions aux limites en pression.
\newblock In {\em \'{E}quations aux d\'{e}riv\'{e}es partielles et
  applications}, pages 107--126. Gauthier-Villars, \'{E}d. Sci. M\'{e}d.
  Elsevier, Paris, 1998.

\bibitem{bardos2007}
C.~Bardos and E.~Titi.
\newblock {E}uler equations for incompressible ideal fluids.
\newblock {\em Russian Mathematical Surveys}, 62(3):409, 2007.

\bibitem{bardos2013}
C.~W. Bardos and E.~S. Titi.
\newblock Mathematics and turbulence: where do we stand?
\newblock {\em Journal of Turbulence}, 14(3):42--76, 2013.

\bibitem{bertagnolio1999}
F.~Bertagnolio.
\newblock Solution of the incompressible {N}avier--{S}tokes equations on
  domains with one or several open boundaries.
\newblock {\em International Journal for Numerical Methods in Fluids},
  31(7):1061--1085, 1999.

\bibitem{boyer07}
F.~Boyer and P.~Fabrie.
\newblock Outflow boundary conditions for the incompressible non-homogeneous
  {N}avier--{S}tokes equations.
\newblock {\em Discrete and Continuous Dynamical Systems. Series B},
  7(2):219--250, 2007.

\bibitem{bravin2022}
M.~Bravin and F.~Sueur.
\newblock Existence of weak solutions to the two-dimensional incompressible
  {E}uler equations in the presence of sources and sinks.
\newblock {\em Advances in Differential Equations}, 27(11-12):683--734, 2022.

\bibitem{brue2024}
E.~Bru\'e, M.~Colombo, G.~Crippa, C.~De~Lellis, and M.~Sorella.
\newblock Onsager critical solutions of the forced {N}avier--{S}tokes
  equations.
\newblock {\em Communications on Pure and Applied Analysis}, 23(10):1350--1366,
  2024.

\bibitem{brue2023}
E.~Bru{\`e} and C.~De~Lellis.
\newblock Anomalous dissipation for the forced 3{D} {N}avier--{S}tokes
  equations.
\newblock {\em Communications in Mathematical Physics}, 2023.

\bibitem{burczakArXiV2025}
J.~{Burczak}, L.~{Sz{\'e}kelyhidi}, Jr., and B.~{Wu}.
\newblock {Anomalous dissipation and Euler flows}.
\newblock {\em arXiv e-prints}, page arXiv:2310.02934, Oct. 2023.

\bibitem{chemetov08}
N.~V. Chemetov and S.~N. Antontsev.
\newblock Euler equations with non-homogeneous {N}avier slip boundary
  conditions.
\newblock {\em Physica D. Nonlinear Phenomena}, 237(1):92--105, 2008.

\bibitem{chemetov2014}
N.~V. Chemetov and F.~Cipriano.
\newblock Inviscid limit for {N}avier--{S}tokes equations in domains with
  permeable boundaries.
\newblock {\em Applied Mathematics Letters}, 33:6--11, 2014.

\bibitem{cheskidov2024}
A.~Cheskidov and X.~Luo.
\newblock Anomalous dissipation, anomalous work, and energy balance for the
  {N}avier--{S}tokes equations.
\newblock {\em SIAM Journal on Mathematical Analysis}, 53(4):3856--3887, 2021.

\bibitem{choi2014}
K.~Choi and A.~F. Vasseur.
\newblock Estimates on fractional higher derivatives of weak solutions for the
  {N}avier--{S}tokes equations.
\newblock {\em Ann. Inst. H. Poincar\'{e} Anal. Non Lin\'{e}aire},
  31(5):899--945, 2014.

\bibitem{chorin1990}
A.~J. Chorin and J.~E. Marsden.
\newblock {\em A mathematical introduction to fluid mechanics}, volume~3.
\newblock Springer, 1990.

\bibitem{colombo2023}
M.~Colombo, G.~Crippa, and M.~Sorella.
\newblock Anomalous dissipation and lack of selection in the
  {O}bukhov--{C}orrsin theory of scalar turbulence.
\newblock {\em Annals of PDE}, 9(2):Paper No. 21, 48, 2023.

\bibitem{constantin2015}
P.~Constantin, I.~Kukavica, and V.~Vicol.
\newblock On the inviscid limit of the {N}avier--{S}tokes equations.
\newblock {\em Proceedings of the American Mathematical Society},
  143(7):3075--3090, 2015.

\bibitem{constantin2018}
P.~Constantin and V.~Vicol.
\newblock Remarks on high {R}eynolds numbers hydrodynamics and the inviscid
  limit.
\newblock {\em Journal of Nonlinear Science}, 28(2):711--724, 2018.

\bibitem{derosa2024}
L.~De~Rosa, T.~D. Drivas, and M.~Inversi.
\newblock On the support of anomalous dissipation measures.
\newblock {\em Journal of Mathematical Fluid Mechanics}, 26(4):Paper No. 56,
  24, 2024.

\bibitem{doering2000a}
C.~R. Doering, E.~A. Spiegel, and R.~A. Worthing.
\newblock Energy dissipation in a shear layer with suction.
\newblock {\em Physics of Fluids}, 12(8):1955--1968, 08 2000.

\bibitem{doering2000b}
C.~R. Doering, E.~A. Spiegel, and R.~A. Worthing.
\newblock {Laminar and turbulent dissipation in shear flow with suction}.
\newblock {\em AIP Conference Proceedings}, 502(1):497--503, 02 2000.

\bibitem{drivas2022}
T.~D. Drivas, T.~M. Elgindi, G.~Iyer, and I.-J. Jeong.
\newblock Anomalous dissipation in passive scalar transport.
\newblock {\em Archive for Rational Mechanics and Analysis}, 243(3):1151--1180,
  2022.

\bibitem{elgindi2024}
T.~M. Elgindi and K.~Liss.
\newblock Norm growth, non-uniqueness, and anomalous dissipation in passive
  scalars.
\newblock {\em Archive for Rational Mechanics and Analysis}, 248(6):Paper No.
  120, 28, 2024.

\bibitem{fei2018}
M.~Fei, T.~Tao, and Z.~Zhang.
\newblock On the zero-viscosity limit of the {Navier--Stokes equations} in
  $\mathbb{R}^3$ without analyticity.
\newblock {\em Journal de Math{\'e}matiques Pures et Appliqu{\'e}es},
  112:170--229, 2018.

\bibitem{gie2012}
G.-M. Gie, M.~Hamouda, and R.~Temam.
\newblock Asymptotic analysis of the {N}avier--{S}tokes equations in a curved
  domain with a non-characteristic boundary.
\newblock {\em Networks and Heterogeneous Media}, 7(4):741--766, 2012.

\bibitem{gie2019}
G.-M. Gie, J.~P. Kelliher, M.~C. {Lopes Filho}, A.~L. Mazzucato, and H.~J.
  {Nussenzveig Lopes}.
\newblock The vanishing viscosity limit for some symmetric flows.
\newblock {\em Annales de l'Institut Henri Poincar{\'e} C, Analyse non
  lin{\'e}aire}, 36(5):1237--1280, 2019.

\bibitem{gie2023}
G.-M. Gie, J.~P. Kelliher, and A.~L. Mazzucato.
\newblock The linearized 3{D} {E}uler equations with inflow, outflow.
\newblock {\em Advances in Differential Equations}, 28(5/6):373 -- 412, 2023.

\bibitem{gie2022}
G.-M. Gie, J.~P. Kelliher, and A.~L. Mazzucato.
\newblock The 3{D} {E}uler equations with inflow, outflow and vorticity
  boundary conditions.
\newblock {\em Journal de Math{\'e}matiques Pures et Appliqu{\'e}es},
  193:103628, 2025.

\bibitem{guoyang2024}
Y.~Guo and Z.~Yang.
\newblock Small scales in inviscid limits of steady fluids.
\newblock {\em arXiv preprint arXiv:2409.09604}, 2024.

\bibitem{huysmans2024}
L.~Huysmans and E.~S. Titi.
\newblock Non-uniqueness \& inadmissibility of the vanishing viscosity limit of
  the passive scalar transport equation.
\newblock {\em Journal de Math\'{e}matiques Pures et Appliqu\'{e}es. Neuvi\`eme
  S\'{e}rie}, 198:Paper No. 103685, 51, 2025.

\bibitem{kato1984}
T.~Kato.
\newblock Remarks on zero viscosity limit for nonstationary {N}avier--{S}tokes
  flows with boundary.
\newblock In S.~S. Chern, editor, {\em Seminar on Nonlinear Partial
  Differential Equations}, pages 85--98, New York, NY, 1984. Springer New York.

\bibitem{kelliher2007}
J.~P. Kelliher.
\newblock On {K}ato's conditions for vanishing viscosity.
\newblock {\em Indiana University Mathematics Journal}, 56(4):1711--1721, 2007.

\bibitem{kolmogoroff1941c}
A.~N. Kolmogoroff.
\newblock Dissipation of energy in the locally isotropic turbulence.
\newblock {\em C. R. (Doklady) Acad. Sci. URSS (N. S.)}, 32:16--18, 1941.

\bibitem{kolmogoroff1941a}
A.~N. Kolmogoroff.
\newblock The local structure of turbulence in incompressible viscous fluid for
  very large {R}eynold's numbers.
\newblock {\em C. R. (Doklady) Acad. Sci. URSS (N. S.)}, 30:301--305, 1941.

\bibitem{kolmogoroff1941b}
A.~N. Kolmogoroff.
\newblock On degeneration of isotropic turbulence in an incompressible viscous
  liquid.
\newblock {\em C. R. (Doklady) Acad. Sci. URSS (N. S.)}, 31:538--540, 1941.

\bibitem{kukavica2023}
I.~Kukavica, W.~O{\.z}a{\'n}ski, and M.~Sammartino.
\newblock The inviscid inflow-outflow problem via analyticity.
\newblock {\em Archive for Rational Mechanics and Analysis}, 249(3):27, 2025.

\bibitem{lombardo01}
M.~C. Lombardo and M.~Sammartino.
\newblock Zero viscosity limit of the {O}seen equations in a channel.
\newblock {\em SIAM Journal on Mathematical Analysis}, 33(2):390--410, 2001.

\bibitem{lopes2008}
M.~C. Lopes~Filho, A.~L. Mazzucato, H.~J. Nussenzveig~Lopes, and M.~Taylor.
\newblock Vanishing viscosity limits and boundary layers for circularly
  symmetric 2{D} flows.
\newblock {\em Bulletin of the Brazilian Mathematical Society, New Series},
  39(4):471--513, 2008.

\bibitem{maekawa2018}
Y.~Maekawa and A.~Mazzucato.
\newblock The inviscid limit and boundary layers for {N}avier--{S}tokes flows.
\newblock In Y.~Giga and A.~Novotn{\'y}, editors, {\em Handbook of Mathematical
  Analysis in Mechanics of Viscous Fluids}, pages 781--828. Springer
  International Publishing, Cham, 2018.

\bibitem{masmoudi1998}
N.~Masmoudi.
\newblock The {Euler} limit of the {N}avier‐{S}tokes equations, and rotating
  fluids with boundary.
\newblock {\em Archive for Rational Mechanics and Analysis}, 142(4):375--394,
  1998.

\bibitem{monin2007}
A.~S. Monin and A.~M. Yaglom.
\newblock {\em Statistical fluid mechanics: mechanics of turbulence. {V}ol.
  {II}}.
\newblock Dover Publications, Inc., Mineola, NY, 2007.

\bibitem{petcu2006}
M.~Petcu.
\newblock Euler equation in a 3{D} channel with a noncharacteristic boundary.
\newblock {\em Differential and Integral Equations}, 19(3):297--326, 2006.

\bibitem{prandtl1904}
L.~Prandtl.
\newblock {\"U}ber flussigkeitsbewegung bei sehr kleiner reibung.
\newblock {\em Actes du 3me Congr{\`e}s International des Math{\'e}maticiens,
  Heidelberg, Teubner, Leipzig}, pages 484--491, 1904.

\bibitem{sammartino1998}
M.~Sammartino and R.~E. Caflisch.
\newblock Zero viscosity limit for analytic solutions of the {N}avier--{S}tokes
  equation on a half-space. {II}. construction of the {N}avier--{S}tokes
  solution.
\newblock {\em Communications in Mathematical Physics}, 192(2):463--491, 1998.

\bibitem{temam2024}
R.~Temam.
\newblock {\em Navier--Stokes equations: theory and numerical analysis}, volume
  343.
\newblock American Mathematical Society, 2024.

\bibitem{temam1997}
R.~Temam and X.~Wang.
\newblock On the behavior of the solutions of the {N}avier--{S}tokes equations
  at vanishing viscosity.
\newblock {\em Annali della Scuola Normale Superiore di Pisa - Classe di
  Scienze}, Ser. 4, 25(3-4):807--828, 1997.

\bibitem{temam2002}
R.~Temam and X.~Wang.
\newblock Boundary layers associated with incompressible {N}avier–{S}tokes
  equations: The noncharacteristic boundary case.
\newblock {\em Journal of Differential Equations}, 179(2):647--686, 2002.

\bibitem{vasseur2010}
A.~F. Vasseur.
\newblock Higher derivatives estimate for the 3{D} {N}avier--{S}tokes equation.
\newblock {\em Ann. Inst. H. Poincar\'{e} Anal. Non Lin\'{e}aire},
  27(5):1189--1204, 2010.

\bibitem{vasseur2021}
A.~F. Vasseur and J.~Yang.
\newblock Second derivatives estimate of suitable solutions to the 3{D}
  {N}avier--{S}tokes equations.
\newblock {\em Archive for Rational Mechanics and Analysis}, 241(2):683--727,
  2021.

\bibitem{vasseur2023}
A.~F. Vasseur and J.~Yang.
\newblock Boundary vorticity estimates for {N}avier--{S}tokes and application
  to the inviscid limit.
\newblock {\em SIAM J. Math Anal.}, 55(4):3081--3107, 2023.

\bibitem{vasseur2024}
A.~F. Vasseur and J.~Yang.
\newblock Layer separation of the 3{D} incompressible {N}avier--{S}tokes
  equation in a bounded domain.
\newblock {\em Communications in Partial Differential Equations},
  49(4):381--409, 2024.

\bibitem{wang2001}
X.~Wang.
\newblock A {K}ato type theorem on zero viscosity limit of {N}avier--{S}tokes
  flows.
\newblock {\em Indiana University Mathematics Journal}, 50:223--241, 2001.

\bibitem{yaglom1949local}
A.~Yaglom et~al.
\newblock On the local structure of a temperature field in a turbulent flow.
\newblock In {\em Dokl. akad. nauk sssr}, volume~69, pages 743--746, 1949.

\bibitem{yang2023}
J.~Yang.
\newblock Vorticity interior trace estimates and higher derivative estimates
  via blow-up method.
\newblock {\em Journal of Differential Equations}, 442:113486, 2025.

\bibitem{zajaczkowski87.2}
W.~M. Zaj{\c{a}}czkowski.
\newblock Some leakage problems for ideal incompressible fluid motion in
  domains with edges.
\newblock In {\em Partial differential equations ({W}arsaw, 1984)}, volume~19
  of {\em Banach Center Publ.}, pages 383--397. PWN, Warsaw, 1987.

\bibitem{Zajaczkowski09}
W.~M. Zaj{\c{a}}czkowski.
\newblock Large time existence of solutions to the {N}avier--{S}tokes equations
  in axially symmetric domains with inflow and outflow.
\newblock {\em Functiones et Approximatio Commentarii Mathematici}, 40(part
  2):209--250, 2009.

\bibitem{Zajaczkowski87.1}
W.~M. Zaj{\c{a}}zkowski.
\newblock Leakage problem for ideal incompressible fluid motion in bounded
  domain with nonsmooth boundary and vorticity prescribed in inflow.
\newblock {\em Archives of Mechanics (Archiwum Mechaniki Stosowanej)},
  39(5):485--496 (1988), 1987.

\end{thebibliography}

\begin{appendix}
\section{Proof of \texorpdfstring{\cref{lem:boundary-work}}{Lemma 4.2}}
\label{app:cz}

The proof of \cref{lem:boundary-work} relies on a decomposition of the boundary that we now describe in more detail. The main idea is to partition the boundary into a series of boxes, such that if we rescale each box to a unit box, the energy dissipation is of order $1$, so that \cref{lem:local} applies.

In what follows, given two measurable sets $A$ and $B$,  $A \approx B$ means the two sets differ by a measure-zero set.

\begin{lemma}
    \label{lem:l32weaknew}
    Under the hypotheses of \cref{lem:boundary-work}, let $\varepsilon = \sqrt{\nu \tau} < \nu / (2 \Ub)$. The set $(\tau, 5\tau / 4) \times \Gamma ^-$ can be partitioned into
    \begin{align*}
        \left(\tau, 5\tau / 4\right) \times \Gamma ^- \approx \bigcup _{i \in \mathbb N} \bar Q _{(i)} = \bigcup _{i \in \mathbb N} \bar Q _{\varepsilon _i} (t _i, \bx _i),
    \end{align*}
    where $\varepsilon _i \le \varepsilon / 2$, in such a way that the piecewise-constant function $\tomega ^\nu: (\tau, 5\tau / 4) \times \Gamma ^- \to \mathbb R ^d$ defined by
    \begin{align*}
        \tomega ^\nu \big\vert _{\bar Q _{(i)}} = \fint _{\bar Q _{(i)}} \partial _n \unu \d \sigma \d t
    \end{align*}
    satisfies :
    \begin{align*}
        \left\lVert \nu \tomega ^\nu \mathbf1 _{\{|\nu \tomega ^\nu| \ge c \nu \eta \tau ^{-1}\}} \right\rVert _{L ^{\frac32, \infty} ((\tau, 5\tau / 4) \times \Gamma ^-)} ^\frac32 \le {c \eta ^{-\frac12}}\int _{\tau/4} ^{5\tau / 4} \int _{\Gammanuu} \nu \lvert \nabla \unu \rvert ^2 \d x \d t,
    \end{align*}
    for some non-dimensional universal constant $c > 0$.
    In other words, for any $M > c \eta \nu \tau ^{-1}$, it holds that 
    \begin{align*}
        \left\lvert\left\lbrace
            (t, \bx) \in \left(\tau, 5\tau / 4\right) \times \Gamma ^-: |\nu \tomega ^\nu (t, \bx)| > M
        \right\rbrace\right\rvert < \frac {c\eta ^{-\frac12}}{M ^\frac32} \int _{\tau/4} ^{5\tau / 4} \int _{\Gammanuu} \nu \lvert \nabla \unu \rvert ^2 \d x \d t.
    \end{align*}
\end{lemma}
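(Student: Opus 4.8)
The plan is to run a Calder\'on--Zygmund stopping-time argument on parabolically-scaled boxes and to invoke \cref{lem:local} as the local building block: on every box where the rescaled energy is small, \cref{lem:local} controls the average of $\partial _n \unu$, and the measure of the remaining ``bad'' boxes is charged to the energy dissipation. First I would tile $(\tau, 5\tau/4) \times \Gamma ^-$ by top-scale cubes $\bar Q _{\varepsilon/2}$, noting that their parabolic time-length $(\varepsilon/2) ^2 / \nu = \tau/4$ exactly matches the length of the time window. The standing hypothesis $\varepsilon = \sqrt{\nu \tau} < \nu/(2\Ub)$ is exactly what guarantees that each dilated box $Q ^+ _{2 \varepsilon _i}$ with $\varepsilon _i \le \varepsilon/2$ has depth at most $\varepsilon < \nu/\Ub$ and reaches back in time no earlier than $\tau/4$; hence all the boxes used in the argument remain inside the slab $(\tau/4, 5\tau/4) \times \Gammanuu$ that appears on the right-hand side. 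I would then subdivide dyadically in the parabolic scaling (splitting the spatial $(d-1)$-box into $2 ^{d-1}$ pieces and the time interval into $4$), keeping a box $Q$ of scale $\varepsilon _Q$ as a \emph{leaf} when $\fint _{Q ^+ _{2 \varepsilon _Q}} \lvert \nabla \unu \rvert ^2 \le \eta ^2 \nu ^2 \varepsilon _Q ^{-4}$ and subdividing it otherwise. Since $\nabla \unu \in L ^2$, the Lebesgue differentiation theorem shows that almost every point eventually lands in a leaf, so the leaves partition the slab up to a null set (this is the meaning of $\approx$) and define $\tomega ^\nu$ almost everywhere.

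On each leaf the stopping inequality is precisely the hypothesis of \cref{lem:local} (the required smallness of $\nu$ relative to $\bU$ being inherited from the hypotheses of \cref{lem:boundary-work}), so that lemma gives $\lvert \nu \tomega ^\nu \rvert = \nu \lvert \fint _{\bar Q _{(i)}} \partial _n \unu \rvert \le c \eta \nu ^2 \varepsilon _i ^{-2}$. In particular the top-scale leaves satisfy $\lvert \nu \tomega ^\nu \rvert \le c \eta \nu \tau ^{-1}$, which is exactly why only subdivided boxes can contribute to the super-level set once $M > c \eta \nu \tau ^{-1}$; this explains the indicator $\mathbf 1 _{\{\lvert \nu \tomega ^\nu \rvert \ge c \nu \eta \tau ^{-1}\}}$ in the statement. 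For such $M$, every contributing leaf must have scale $\varepsilon _i < \varepsilon _M := (c \eta \nu ^2 / M) ^{1/2}$, and its parent $\hat Q$ (of scale $2 \varepsilon _i$) failed the stopping test, yielding the energy lower bound $\int _{\hat Q ^+ _{4 \varepsilon _i}} \lvert \nabla \unu \rvert ^2 \gtrsim \eta ^2 \nu \varepsilon _i ^{d-2}$.

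Writing the boundary measure of the leaf as $\lvert \bar Q _{(i)} \rvert = \varepsilon _i ^{d+1}/\nu = \varepsilon _i ^3 \cdot \varepsilon _i ^{d-2} / \nu$ and inserting both the energy lower bound and $\varepsilon _i ^3 < \varepsilon _M ^3$, I would obtain
\[
    \lvert \bar Q _{(i)} \rvert \lesssim \frac{\varepsilon _i ^3}{\eta ^2 \nu ^2} \int _{\hat Q ^+ _{4 \varepsilon _i}} \lvert \nabla \unu \rvert ^2 \d x \d t \lesssim \frac{1}{\eta ^{1/2} M ^{3/2}} \int _{\hat Q ^+ _{4 \varepsilon _i}} \nu \lvert \nabla \unu \rvert ^2 \d x \d t ,
\]
where the power $\eta ^{-1/2}$ emerges from $\varepsilon _M ^3 / \eta ^2 \sim \eta ^{3/2}/\eta ^2$. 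Summing over the contributing leaves and using that the dilated parents $\hat Q ^+ _{4 \varepsilon _i}$ have finite overlap (bounded by a purely dimensional constant) converts the disjoint sum of leaf measures into a single integral over $(\tau/4, 5\tau/4) \times \Gammanuu$, giving the claimed weak-$L ^{3/2}$ bound $\lvert \{ \lvert \nu \tomega ^\nu \rvert > M \} \rvert \lesssim \eta ^{-1/2} M ^{-3/2} \int _{\tau/4} ^{5\tau/4} \int _{\Gammanuu} \nu \lvert \nabla \unu \rvert ^2 \d x \d t$, and taking the supremum over admissible $M$ yields the stated norm inequality.

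The hard part will be the bounded-overlap property of the dilated parent boxes, since it is precisely what upgrades the disjointness of the leaves into control by a single energy integral; this is the core of the Calder\'on--Zygmund machinery and is handled exactly as in the decompositions of \cite{vasseur2023,vasseur2024}. A secondary technical complication, which I have suppressed above by working with the flat model as in \cref{lem:local}, is the curvature of $\Gamma ^-$: the boxes must be constructed in boundary-flattening coordinates, and one must verify that both the parabolic dyadic structure and the conclusion of \cref{lem:local} survive this change of variables with constants independent of $\nu$.
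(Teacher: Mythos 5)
Your construction mirrors the paper's proof almost step for step: the same parabolic stopping-time decomposition with stopping rule $\fint_{Q^+_{2\varepsilon_Q}}\lvert\nabla\unu\rvert^2\le\eta^2\nu^2\varepsilon_Q^{-4}$, the same application of \cref{lem:local} on the leaves, the same observation that top-scale leaves obey $\lvert\nu\tomega^\nu\rvert\le c\eta\nu\tau^{-1}$ so that only leaves of scale $\varepsilon_i<\varepsilon_M\sim(\eta\nu^2/M)^{1/2}$ can enter the super-level set, and the same energy lower bound $\int_{\hat Q^+_{4\varepsilon_i}}\lvert\nabla\unu\rvert^2\gtrsim\eta^2\nu\varepsilon_i^{d-2}$ extracted from the failure of the parent's test. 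The genuine gap is in your final counting step. The claim that the dilated parents $\hat Q^+_{4\varepsilon_i}$ of the contributing leaves have overlap bounded by a purely dimensional constant is false: bounded overlap holds only among parents of a \emph{fixed} scale, while across scales the parents can nest with unbounded multiplicity. For example, take a chain of subdivided boxes $P_0\supset P_1\supset\cdots\supset P_K$, where each $P_m$ has one child that is a leaf and another child that is subdivided and continues the chain; the resulting $K+1$ leaves are disjoint and may all be contributing, yet all of their dilated parents contain the region just above $P_K$, and $K$ is limited only by the (unbounded) depth of the tree. Consequently, after you apply the uniform bound $\varepsilon_i^3<\varepsilon_M^3$, the inequality $\sum_i\int_{\hat Q^+_{4\varepsilon_i}}\nu\lvert\nabla\unu\rvert^2\lesssim\int_{\tau/4}^{5\tau/4}\int_{\Gammanuu}\nu\lvert\nabla\unu\rvert^2\d x\d t$ that your argument needs simply does not follow, and deferring it to the references does not help, because it is not what they prove.

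The repair is exactly the scale-by-scale bookkeeping you tried to skip, and it is where the factor $\varepsilon_i^3$ must be spent: group the contributing leaves into $\mathcal I_k=\{i:\varepsilon_i\sim 2^{-k}\varepsilon\}$, use bounded overlap \emph{within} the fixed scale $k$ to get $\sum_{i\in\mathcal I_k}\lvert\bar Q_{(i)}\rvert\lesssim 2^{-3k}\varepsilon^3\eta^{-2}\nu^{-2}\int\lvert\nabla\unu\rvert^2$, and then sum the geometric series over the scales $2^{-k}\varepsilon\lesssim\varepsilon_M$, which produces the $\varepsilon_M^3\sim\eta^{3/2}\nu^3M^{-3/2}$ factor and hence the claimed $\eta^{-1/2}M^{-3/2}$ bound. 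This is precisely the structure of the paper's proof, which implements the same count through the Hardy--Littlewood maximal function: the interior boxes $Q^+_{(i)}$ of the leaves are pairwise disjoint, $m=\mathcal M\bigl(\lvert\nabla\unu\rvert^2\mathbf 1_{(\tau/4,5\tau/4)\times\Gammanuu}\bigr)$ is large on each of them because a dilated box containing them has large average, and the weak-$(1,1)$ bound applied to each group $\mathcal I_k$, followed by the geometric sum $\sum_{M_k\ge M}M_k^{-3/2}\lesssim M^{-3/2}$, yields the measure estimate. A secondary flaw: your appeal to the Lebesgue differentiation theorem to justify that the leaves exhaust $(\tau,5\tau/4)\times\Gamma^-$ up to a null set is inapplicable, since the points in question lie on the boundary, a null set of space-time, while the averages are taken over interior boxes. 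The correct argument is quantitative and uses the same per-scale count: every box subdivided at scale $2^{-k}\varepsilon$ carries energy $\gtrsim\eta^2\nu(2^{-k}\varepsilon)^{d-2}$ in its dilated version, so the total boundary measure of subdivided boxes at scale $k$ is $\lesssim 2^{-3k}\varepsilon^3\eta^{-2}\nu^{-2}\int\lvert\nabla\unu\rvert^2$, which tends to zero as $k\to\infty$.
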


\begin{proof}
    We construct the dyadic decomposition using the same strategy as in \cite{vasseur2023}, so below we only sketch the main idea. For simplicity, we assume the boundary is flat, while the general curved boundary can be handled identically as in \cite{vasseur2024}. We may also assume $\Gamma ^-$ can be decomposed into a finite disjoint union of boxes $\bar B _{\varepsilon / 2} (\bx _i)$ of size $\varepsilon / 2$. Thus $(\tau, 5\tau / 4) \times \Gamma ^-$ can be partitioned into the union of $\bar Q _{(i)} = (t _i - \tau _i, t _i) \times \bar B _{\varepsilon _i} (\bx _i)$ with $\varepsilon _i = \varepsilon / 2$, $t _i = 5 \tau / 4$ and $\tau _i = \tau / 4$. For each $\bar Q _{(i)}$, we divide it dyadically into $2 ^d$ smaller pieces \textit{unless} 
    \begin{align}
        \label{eqn:dyadic-stop}
        \left( 
            \fint _{Q _{2 \varepsilon _i} ^+ (t _i, x _i)} \lvert \nabla \unu \rvert ^2 \d x \d t
        \right) ^\frac12 \le \eta \nu \varepsilon _i ^{-2}.
    \end{align}
    This partitioning process may stop after finitely many steps, or continue indefinitely, leaving a zero-measure remainder. Eventually we obtain a partition such that \eqref{eqn:dyadic-stop} is satisfied for each $\bar Q _{(i)}$ (up to re-indexing). Moreover, for each $\varepsilon _i < \varepsilon / 2$ the following holds: 
    \begin{align*}
        \left( 
            \fint _{Q _{2 \varepsilon _i} ^+ (t _i, x _i)} \lvert \nabla \unu \rvert ^2 \d x \d t
        \right) ^\frac12 > \frac1{16} \eta \nu \varepsilon _i ^{-2},
    \end{align*}
    because the partition did not stop at the previous step.

    From \cref{lem:local}, we know that in each $\bar Q _{(i)}$, 
    \begin{align*}
        \left\lvert 
            \tomega ^\nu \big\vert _{\bar Q _{(i)}}
        \right\rvert \le c \eta \nu \varepsilon _i ^{-2}.
    \end{align*}
    If $\varepsilon _i = \varepsilon / 2$, then $\tau _i = \tau / 4$ and
    \begin{align*}
        \left\lvert 
            \nu \tomega \big\vert _{\bar Q _{(i)}}
        \right\rvert \le c \eta \nu ^2 \varepsilon ^{-2} = c \eta \nu \tau ^{-1} < M.
    \end{align*}
    Let $\mathcal M$ be the maximal operator in $\mathbb R \times \mathbb R ^d$. For $(t, x) \in (0, T) \times \Omega$, denote 
    \begin{align*}
        m (t, x) = \mathcal M \left(|\nabla \unu| ^2 \mathbf1 _{(\tau / 4, 5 \tau / 4) \times \Gammanuu}\right).
    \end{align*}
    Then for every $\varepsilon _i < \varepsilon / 2$ and every $(t, x) \in Q _{(i)} ^+$, it holds that 
    \begin{align*}
        \eta \nu \varepsilon _i ^{-2} < 16 \left( 
            \fint _{Q _{2 \varepsilon _i} ^+ (t _i, x _i)} \lvert \nabla \unu \rvert ^2 \d x \d t
        \right) ^\frac12 \le c m ^{\frac12} (t, x).
    \end{align*}
    Denote 
    \begin{align*}
        \mathcal I _k = \left\lbrace
            i \in \mathbb N: 2 ^{-k-1} \varepsilon \le \varepsilon _i < 2 ^{-k} \varepsilon
        \right\rbrace, \qquad k \ge 0.
    \end{align*}
    Then for every $i \in \mathcal I _k$, it holds that  
    \begin{align*}
        \left\lvert 
            \nu \tomega ^\nu \big\vert _{\bar Q _{(i)}}
        \right\rvert \le c \eta \nu ^2 \varepsilon _i ^{-2} \le c \eta \nu ^2 \varepsilon ^{-2} 2 ^{2k} =: M _k.
    \end{align*}
    Moreover, the last two quantities are comparable by a factor of $2 ^2$, so 
    \begin{align*}
        M _k ^\frac32 \le 8 c \eta ^\frac32 \nu ^3 \varepsilon _i ^{-3} = c \eta ^{-\frac12} \varepsilon _i \nu \cdot (\eta \nu \varepsilon _i ^{-2}) ^2 \le c \eta ^{-\frac12} \varepsilon _i \nu m (t, x)
    \end{align*}
    for every $(t, x) \in Q _{(i)} ^+$, so that
    \begin{align*}
        \nu m \big\vert _{Q _{(i)} ^+} \ge \frac{1}{c} 2 ^k \varepsilon ^{-1} \eta ^\frac12 M _k ^\frac32.
    \end{align*}
    Therefore,  the total measure of these cubes is bounded by 
    \begin{align*}
        \sum _{i \in \mathcal I _k} |Q _{(i)} ^+| &\le \left\lvert 
            \left\lbrace\nu m \ge \frac{1}{c} 2 ^k \varepsilon ^{-1} \eta ^\frac12 M _k ^\frac32
            \right\rbrace
        \right\rvert \\
        &\le c 2 ^{-k} \varepsilon \eta ^{-\frac12} M _k ^{-\frac32} \left\lVert \nu m \right\rVert _{L ^{1, \infty} (\mathbb R \times \mathbb R ^d)} \\
        & \le c 2 ^{-k} \varepsilon \eta ^{-\frac12} M _k ^{-\frac32} \left\lVert \nu |\nabla \unu| ^2 \mathbf1 _{(\tau / 4, 5 \tau / 4) \times \Gammanuu} \right\rVert _{L ^1 (\mathbb{R} \times \mathbb{R} ^d)} \\
        &= c 2 ^{-k} \varepsilon \eta ^{-\frac12} M _k ^{-\frac32} \int _{\tau / 4} ^{5 \tau / 4} \int _{\Gammanuu} \nu \lvert \nabla \unu \rvert ^2 \d x \d t.
    \end{align*}
    Note that $|Q _{(i)} ^+| = \varepsilon _i |\bar Q _{(i)}| \ge 2 ^{-k-1} \varepsilon |\bar Q _{(i)}|$. Hence
    \begin{align*}
        \left\lvert\left\lbrace
            |\nu \tomega ^\nu| > M
        \right\rbrace\right\rvert 
        &\le \sum _{i \in \mathbb N} \left\lbrace
            |\bar Q _{(i)}| : i \in \mathcal I _k, M _k > M
        \right\rbrace \\
        &= \sum _{k \in \mathbb N, M _k \ge M} \sum _{i \in \mathcal I _k} |\bar Q _{(i)}| \\
        &= \sum _{k \in \mathbb N, M _k \ge M} 2 ^{k + 1} \varepsilon ^{-1} \sum _{i \in \mathcal I _k} |Q _{(i)} ^+| \\
        &\le c \sum _{k \in \mathbb N, M _k \ge M} \eta ^{-\frac12} M _k ^{-\frac32} \int _{\tau / 4} ^{5 \tau / 4} \int _{\Gammanuu} \nu \lvert \nabla \unu \rvert ^2 \d x \d t \\
        &= c \eta ^{-\frac12} M ^{-\frac32} \int _{\tau / 4} ^{5 \tau / 4} \int _{\Gammanuu} \nu \lvert \nabla \unu \rvert ^2 \d x \d t.
    \end{align*}
    This completes the proof of the lemma.
\end{proof}

We are now ready to prove \cref{lem:boundary-work} using this partition.

\begin{proof}[Proof of \cref{lem:boundary-work}]
First, using the partition in \cref{lem:l32weaknew}, we decompose 
    \begin{align*}
        \nu \int _{\tau} ^{5 \tau / 4} \int _{\Gamma ^-} \partial _n \unu \cdot \bV \d \sigma \d s 
        &= \sum _{i \in \mathbb N} \nu \int _{\bar Q _{(i)}} \partial _n \unu \cdot \bV \d \sigma \d s \\
        &= \sum _{i \in \mathbb N} \nu \int _{\bar Q _{(i)}} \tomega ^\nu \cdot \bV \d \sigma \d s \\
        & \qquad + \sum _{i \in \mathbb N} \nu \int _{\bar Q _{(i)}} \partial _n \unu \cdot \left(\bV - \left(\fint _{\bar Q _{(i)}} \bV \right) \right) \d \sigma \d s. 
    \end{align*}
Here we used the fact that taking the average defines a projection operator, which is self-adjoint. First, we study the second term. Take $(t, \bx) \in \bar Q _{(i)}$, then 
    \begin{align*}
        \left\lvert \bV (t, \bx) - \left(\fint _{\bar Q _{(i)}} \bV \right) \right\rvert
        &\le \left\lVert \nabla \bV \right\rVert _\infty \varepsilon _i + \left\lVert \partial _t \bV \right\rVert _\infty \varepsilon _i ^2 \nu ^{-1} \\
        &\le \left\lVert \nabla \bV \right\rVert _\infty \varepsilon + \left\lVert \partial _t \bV \right\rVert _\infty \varepsilon ^2 \nu ^{-1} \\
        &\le \nu \left(
            \lVert \nabla \bV \rVert _\infty \Ub ^{-1} + \lVert \partial _t \bV \rVert _\infty \Ub ^{-2}
        \right) = \kappa \nu. 
    \end{align*}
    Therefore
    \begin{align*}
        \sum _{i \in \mathbb N} \nu \int _{\bar Q _{(i)}} \partial _n \unu \cdot \left(\bV - \left(\fint _{\bar Q _{(i)}} \bV \right) \right) \d \sigma \d s \lesssim \kappa \nu ^2 
        \lVert \partial _n \unu \rVert _{L ^\frac43 ((\tau, 5 \tau / 4) \times \Gamma ^-)} (\tau S) ^{\frac14}.
    \end{align*}

    Next, we study the first term. Using \cref{lem:l32weaknew}, we have
    \begin{align*}
        & \nu \int _{\tau} ^{5 \tau / 4} \int _{\Gamma ^-} \tomega ^\nu \cdot \bV \d \sigma \d s \\
            & \qquad = \int _{\tau} ^{5 \tau / 4} \int _{\Gamma ^-} \nu \tomega ^\nu \cdot \bV \mathbf1 _{\{|\nu \tomega ^\nu| \ge c \nu \eta \tau ^{-1}\}} \d \sigma \d s + \int _{\tau} ^{5 \tau / 4} \int _{\Gamma ^-} c \nu \eta \tau ^{-1} \cdot |\bV| \d \sigma \d s \\
        & \qquad \le c \left( 
            \eta ^{-\frac12} \int _{\tau / 4} ^{5 \tau / 4} \int _{\Gammanuu} \nu \lvert \nabla \unu \rvert ^2 \d x \d t
        \right) ^\frac23 \lVert \bV \rVert _{L ^{3, 1} ((\tau, 5 \tau / 4) \times \Gamma ^-)} + c \nu \eta \Vb S \\
        & \qquad \le c \left( 
            \eta ^{-\frac12} \int _{\tau / 4} ^{5 \tau / 4} \int _{\Gammanuu} \nu \lvert \nabla \unu \rvert ^2 \d x \d t
        \right) ^\frac23 \Vb (\tau S) ^\frac13 + c \nu \eta \Vb S.
    \end{align*}
    This estimate holds for any $\eta \le 1$. Optimizing $\eta \in (0, 1]$ yields 
    \begin{align*}
        \nu \int _{\tau} ^{5 \tau / 4} \int _{\Gamma ^-} \tomega ^\nu \cdot \bV \d \sigma \d s & \lesssim \left( 
            \int _{\tau / 4} ^{5 \tau / 4} \int _{\Gammanuu} \nu \lvert \nabla \unu \rvert ^2 \d x \d t
        \right) ^\frac23 \Vb \tau  ^\frac13 S ^\frac13 \\
        & \qquad + \nu ^\frac14 \left( 
            \int _{\tau / 4} ^{5 \tau / 4} \int _{\Gammanuu} \nu \lvert \nabla \unu \rvert ^2 \d x \d t
        \right) ^\frac12 \Vb \tau ^\frac14 S ^\frac12.
    \end{align*}
    This completes the proof of the lemma.
\end{proof}

\end{appendix}

\end{document}